\documentclass[12pt,twoside]{amsart}
\usepackage{amssymb,amscd}
\usepackage{mathrsfs}
\usepackage{array,float}
\usepackage[all]{xy}
\usepackage{enumerate}

\addtolength{\textwidth}{2cm}
\addtolength{\textheight}{2cm}
\addtolength{\voffset}{-1cm}
\addtolength{\hoffset}{-1cm}

%
\setlength{\extrarowheight}{0.05cm}

\linespread{1.1}

\theoremstyle{plain}
\newtheorem{thm}{Theorem}[section]
\newtheorem{thmx}{Theorem}

\newtheorem{corx}[thmx]{Corollary}

\newtheorem{lem}[thm]{Lemma}
\newtheorem{pro}[thm]{Proposition}

\newtheorem{question}[thm]{Question}

\theoremstyle{remark}
\newtheorem{rem}[thm]{Remark}

\theoremstyle{definition} 
\newtheorem{definition}[thm]{Definition}

\numberwithin{equation}{section}

\newcommand{\Z}{\mathbb{Z}}

\renewcommand{\phi}{\varphi}


\begin{document}

\author{Ilir Snopce}
\address{Universidade Federal do Rio de Janeiro\\
  Instituto de Matem\'atica \\
  21941-909 Rio de Janeiro, RJ \\ Brasil }
\email{ilir@im.ufrj.br}

\thanks{This research was partially supported by CNPq}

\title[Asymptotic density of test elements] {Asymptotic density of test elements in free groups and surface groups}

\author{Slobodan Tanushevski} \address{Universidade Federal do Rio de Janeiro\\
  Instituto de Matem\'atica \\
  21941-909 Rio de Janeiro, RJ \\ Brasil }

\email{tanusevski1@gmail.com}

\begin{abstract}
An element $g$ of a group $G$ is a test element  if every endomorphism of $G$ that fixes $g$ is an automorphism.
Let $G$ be a free group of finite rank, an orientable surface group of genus $n \geq 2$, or a non-orientable surface group of genus $n \geq 3$.
Let $\mathcal{T}$ be the set of test elements of $G$.
We prove that $\mathcal{T}$  is a net. From this result we derive that $\mathcal{T}$ has positive asymptotic density in $G$.
This answers a question of Kapovich, Rivin, Schupp, and Shpilrain.  
Furthermore, we prove that $\mathcal{T}$ is dense in the profinite topology on $G$.
\end{abstract}

\subjclass[2010]{20E05, 20E18, 20E36}

\maketitle

\section{Introduction}

Let $G$ be a finitely generated group with a finite generating set $X$. Given $g \in G$, we denote by $|g|_X$ the smallest integer $k \geq 0$ for which there exist $x_1, \ldots, x_k \in X^{\pm 1}$ such that $g=x_1 \ldots x_k$.
The \emph{word metric} on $G$ with respect to $X$ is defined by $d_X(g,h)=|g^{-1}h|_X$ for $g,h \in G$.
A subset $S$ of $G$ is called a $C$-\emph{net} ($0 \leq C < \infty$) with respect to $X$ if 
\[d_X(g,S)=\inf\{d_X(g, s) \mid s \in S\} \leq C\]
for all $g \in G$. It is easy to show that the property of being a net (a $C$-net for some constant C) does not depend on the choice of a finite generating set $X$.


Let $G$ be a group. An element $g \in G$ is a \emph{test element} of $G$ if every endomorphism of $G$ that fixes $g$ is an automorphism. The notion of a test element was introduced by Shpilrain \cite{Shpilrain} and has since become a subject of active research in group theory. Test elements have been also studied in other  algebraic structures such as polynomial algebras and Lie algebras (see \cite{Mikhalev} and the references therein).

Let $F(x_1, \ldots, x_n)$ be a free group with basis $\{x_1, \ldots, x_n \}$. In 1918, Nielsen \cite{Nielsen} proved that the commutator $[x_1, x_2]$ is a test element of $F(x_1, x_2)$.  Zieschang \cite{Zieschang1} generalized this result by proving  that $[x_1, x_2][x_3,x_4]\ldots [x_{2m-1}, x_{2m}]$ is a test element of $F( x_1, x_2, ..., x_{2m})$. 
Further, Rips \cite{Rips} showed that every higher commutator $[x_1, x_2, ... , x_n]$ is a test element of $F(x_1, ..., x_n)$. 
In another direction, Turner  \cite{Turner} proved that  $x_1^{k_1}x_2^{k_2}\ldots x_n^{k_n}$  is a test element of $F(x_1, ..., x_n)$ if and only if $k_i \neq 0$ for all $1 \leq i \leq n$ and $\gcd(k_1, \ldots, k_n) \neq 1$ (see also \cite{Zieschang2}).  

Let $G = \langle  x_1, \ldots, x_{2n}     |    [x_1, x_2]\ldots [x_{2n-1}, x_{2n}]  \rangle$  be an orientable surface group of genus $n\geq 2$.
O'Neill and Turner \cite{O'Neill} proved that $x_1^kx_2^k \ldots x_{2n}^k$  is a test element of $G$ for every $k \geq 2$. In \cite{Slobodan}, we study test elements in pro-$p$ groups and use our results to give multitude of new examples of test elements in free and surface groups. In this short note we utilize the pro-$p$ techniques developed in \cite{Slobodan} to describe the distribution of test elements in these two classes of groups.   

\begin{thmx}
\label{free}
The set of test elements of a free group $F$ of rank $n\geq 2$  is a $(3n-2)$-net  with respect to every finite generating set of $F$.
\end{thmx}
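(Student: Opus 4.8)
The plan is to show that from any group element, one can reach a test element by modifying at most $3n-2$ letters. The guiding principle is the known fact that an element is a test element whenever it is not contained in any proper retract, equivalently, it must be "non-free" in every factor of every free splitting. A clean sufficient criterion is available through the pro-$p$ machinery cited: an element $g$ is a test element of $F$ if its image in the abelianized pro-$p$ quotient (or, more concretely, a suitable homomorphic image) exhibits the right nondegeneracy. The task then reduces to producing, near an arbitrary word, an element satisfying such a criterion.

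\medskip

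First I would fix a basis $X = \{x_1, \dots, x_n\}$ and recall a concrete family of test elements whose form is stable under small perturbations — Turner's criterion provides one such family, since $x_1^{k_1} \cdots x_n^{k_n}$ is a test element precisely when all $k_i \neq 0$ and $\gcd(k_1, \dots, k_n) \neq 1$. Given an arbitrary $g \in F$, the strategy is to append a short suffix that forces the resulting element into the test set. Concretely, I would take a word $w$ of bounded length (at most $3n-2$ in the generators) such that $gw$ is guaranteed to be a test element, regardless of $g$; then $d_X(g, \mathcal{T}) \le |w|_X \le 3n-2$. The content is that a single uniformly bounded correction word works for every $g$ simultaneously.

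\medskip

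The mechanism I expect to use is the pro-$p$ test-element criterion from the companion paper, which typically asserts that $g$ is a test element of $F$ if its image in a fixed finite quotient, or its class modulo $[F,F]F^p$, spans enough of the abelianization. Appending $w = x_1^{a_1} \cdots x_n^{a_n}$ adjusts the abelianized image of $g$ by the vector $(a_1, \dots, a_n)$; since the abelianization map is surjective, I can choose the $a_i$ to land the total image in the prescribed "good" region (e.g., all coordinates nonzero mod $p$ with a shared common factor). The cost is bounded: one correction per generator plus a few extra letters to control the leading exponents, and the arithmetic gives $3n-2$ rather than a larger constant. I would verify the exact count by distinguishing the cheapest adjustment in each of the $n$ coordinates and accounting for the parity/gcd condition.

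\medskip

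The hard part will be achieving the sharp constant $3n-2$ rather than merely some bound linear in $n$, and doing so uniformly — i.e., proving that the same bounded suffix works no matter how $g$ sits relative to $X$. Any naive appending can cost up to two letters per generator (giving $2n$ or more) and the gcd condition may force an additional correction; the refinement to $3n-2$ presumably exploits that the pro-$p$ criterion is insensitive to the final letter's sign and allows overlapping economies between coordinates. I would isolate this as the main lemma: a uniform bound on the word-length of a correction suffix that drives the abelianized (or pro-$p$) image into the test region, and then invoke the criterion from the cited work to conclude that the corrected element is genuinely a test element of $F$.
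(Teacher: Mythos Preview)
Your high-level shape is right --- from an arbitrary $g$ you append a short suffix and land on a test element --- but the mechanism you propose does not work. There is no criterion of the form ``$g$ is a test element of $F$ if its image in $F/[F,F]F^p$ (or in any fixed finite quotient) lies in a prescribed region.'' Being a test element is not an abelianization-level property: elements with the same image mod $[F,F]F^p$ can lie in proper retracts or not. In particular, Turner's criterion applies only to words literally of the form $x_1^{k_1}\cdots x_n^{k_n}$; appending such a word to an arbitrary $g$ does not produce an element of that form, so that criterion gives you nothing. Likewise, the correction cannot be a single fixed suffix independent of $g$; it must depend on $g$.

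What the paper actually does is a two-stage correction in the pro-$2$ completion $\widehat{F}_2$. First, it appends at most $n$ letters $x_1^{\epsilon_1}\cdots x_n^{\epsilon_n}$ with $\epsilon_i\in\{0,\pm 1\}$ to push $g$ into $\Phi(\widehat{F}_2)$; this step \emph{is} governed by the mod-$2$ abelianization and is where your intuition is correct. The second stage is the part you are missing: an ``extension'' lemma (Lemma~\ref{extension - free}(b)) says that any nontrivial $u\in\Phi(\widehat{F}_2)$ can be multiplied by $x_{i_1}^2\cdots x_{i_m}^2$ for some \emph{proper} subset $\{x_{i_1},\dots,x_{i_m}\}$ of the basis (so $m\le n-1$) to obtain a test element of $\widehat{F}_2$; this is proved via the notion of an almost primitive element, not via any abelian data. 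Proposition~\ref{pro-p to discrete} then transfers the test property back to $F$. The count $n+2(n-1)=3n-2$ falls out immediately. You should also note why the statement holds for \emph{every} finite generating set $X$: any such $X$ generates $\widehat{F}_2$ and hence contains a subset that is a basis there, and all corrections are taken from that subset.
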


\begin{thmx}
\label{surface}
\begin{enumerate}[(i)]
\item The set of test elements of an orientable surface group $G$ of genus $n\geq 2$ is a $(161n+8 \cdot 25^n(n-1)(16n+1)+33)$-net  with respect to every finite generating set of $G$.
\item The set of test elements of a non-orientable surface group $G$ of genus $n\geq 3$ is a $(5n-5)$-net  with respect to every finite generating set of $G$.
\end{enumerate}
\end{thmx}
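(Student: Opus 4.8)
The plan is to reduce the statement, for a suitable prime $p$, to a reachability problem in a finite $p$-quotient of $G$, using the test-element criterion for pro-$p$ groups developed in \cite{Slobodan}. Since being a net is independent of the chosen finite generating set, I fix the standard generators in each case. The pro-$p$ completion $G_{\hat p}$ of an orientable surface group of genus $n$ is a Demushkin group of rank $2n$ for every prime $p$, while for a suitable odd prime $p$ the completion of a non-orientable surface group of genus $n$ is a free pro-$p$ group of rank $n-1$: the relator $a_1^2\cdots a_n^2$ is primitive modulo the Frattini subgroup (its image $2\sum\bar a_i$ is nonzero since $2$ is a unit), so killing it in the free pro-$p$ group of rank $n$ drops the rank by one. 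The criterion from \cite{Slobodan} supplies, for an explicit prime $p$ and an explicit finite quotient $\pi\colon G\to Q$, a subset $\mathcal{T}_Q\subseteq Q$ such that $\pi(g)\in\mathcal{T}_Q$ forces $g$ to be a test element of $G$; the underlying mechanism is that an endomorphism fixing $g$ induces an endomorphism of $G_{\hat p}$ fixing the image of $g$, which the criterion forces to be an automorphism, and one then descends this to $G$ using the standard residual and Hopfian properties of free and surface groups.

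With this in hand, the proof becomes a uniform length estimate. Given an arbitrary $g\in G$, I seek a word $w$ of bounded length with $\pi(gw)\in\mathcal{T}_Q$; then $gw$ is a test element and $d_X(g,gw)=|w|_X$, so the maximum of $|w|_X$ over all residues $\pi(g)\in Q$ is a net constant. For the non-orientable case the relevant quotient is governed by the free pro-$p$ group of rank $n-1$, and the correction can be performed coordinate by coordinate exactly as in the free-group argument behind Theorem~\ref{free}; bookkeeping the per-generator cost yields the linear bound $5n-5$.

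The orientable case is where the work concentrates. Here the relevant quotient records the image of $g$ in the Frattini layer $V=H_1(G,\F_5)\cong\F_5^{2n}$ together with enough of the next layer to detect the Demushkin cup-product pairing, and the ``good set'' cut out by the criterion is defined by non-degeneracy conditions with respect to this pairing. The obstruction is that, unlike in the free case, the single quadratic relation couples the generators, so one cannot correct the coordinates of $\pi(g)$ independently: steering $\pi(gw)$ into the good set may require moving through the finite group $V$ of order $25^n=5^{2n}$ and then paying an additional commutator cost compatible with the relation. Bounding the length of such a correcting word uniformly in $\pi(g)$ is the core estimate, and carrying it out with explicit constants produces the stated bound $161n+8\cdot 25^n(n-1)(16n+1)+33$; the factor $25^n$ is precisely $|V|$ and is the footprint of this search over the Frattini quotient.

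The main obstacle, then, is this last uniform length bound for the orientable surfaces: one must show that the coupling imposed by the Demushkin relation never forces an arbitrarily long correction, and control the worst case explicitly. I expect the remaining steps---the reduction to the pro-$p$ criterion, the descent to $G$, and the non-orientable (free pro-$p$) case---to be comparatively routine once the criterion of \cite{Slobodan} is invoked, the genuine difficulty being the combinatorics of correcting an element into the good set against the quadratic relation.
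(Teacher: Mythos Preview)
Your overall shape is right---pass to the pro-$p$ completion, correct $g$ by a bounded-length word to a test element of $\widehat G_p$, and descend via Proposition~\ref{pro-p to discrete}---but the central reduction you propose does not exist in the form you state. You claim that \cite{Slobodan} supplies a \emph{finite} quotient $\pi\colon G\to Q$ and a set $\mathcal T_Q\subseteq Q$ with $\pi^{-1}(\mathcal T_Q)$ contained in the test elements, so that the problem becomes reachability in $Q$. No such finite criterion is established there or here; what Proposition~\ref{pro-p to discrete} gives is that a test element of the full pro-$p$ completion $\widehat G_p$ is a test element of $G$, and being a test element of $\widehat G_p$ is not visibly an open condition. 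The paper's mechanism is different: after pushing $w$ into $\Phi^2(\widehat G_5)$, Lemma~\ref{extension - Demushkin} (whose proof chooses a retract containing $w$, then a subgroup in which $w$ is almost primitive, and argues with free factors inside an infinite-index free pro-$p$ subgroup) produces a subset $\{a_{i_1},\dots,a_{i_m}\}\subseteq\{a_1,\dots,a_{n+1}\}$, depending on $w$ and not merely on its image in any fixed finite quotient, such that $w\cdot a_{i_1}^{p^2}\cdots a_{i_m}^{p^2}$ is a test element. The uniform bound comes not from finiteness of a quotient but from the fact that the correcting word always lies in the finite family $\{\,a_{i_1}^{25}\cdots a_{i_m}^{25}:\{i_1,\dots,i_m\}\subseteq\{1,\dots,n+1\}\,\}$, hence has length at most $25(n+1)$.

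Relatedly, you misread the $25^n$ in the constant. It is not the order of $V=H_1(G,\F_5)$ viewed as a search space; it is the Schreier index entering the rank formula for the Demushkin group $\Phi(\widehat G_5)$, namely $d(\Phi(\widehat G_5))=2+5^{2n}(2n-2)$. Pushing from $\Phi$ into $\Phi^2$ requires correcting along each of these $\sim 25^n n$ Schreier generators $y_j$ (each of $X$-length at most $16n+1$) by an exponent in $\{0,\dots,4\}$, which is where $8\cdot 25^n(n-1)(16n+1)$ comes from. There is no ``non-degeneracy with respect to the cup product'' condition being targeted; the Demushkin pairing enters only indirectly, through Lemma~\ref{retract - Demushkin} bounding $d(R)\le n$ for proper retracts, which is what makes the subgroup-rank counts in Lemma~\ref{extension - Demushkin} go through. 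Finally, fixing the standard generators at the outset gives only that $\mathcal T$ is a net, not the stated constant for \emph{every} generating set; the paper extracts a minimal generating set $X'\subseteq X$ inside $\widehat G_p$ so that all correcting letters have $X$-length $1$.
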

Let $G$ be a finitely generated group with a finite generating set $X$.  We denote by $B_X(r)$
the ball of radius $r \geq 0$ centered at the identity in the metric space $(G, d_X)$. 
Given $S \subseteq G$, the \emph{asymptotic  density} of $S$ in $G$ with respect to $X$ is defined as
\[\overline{\rho}_X(S)= \limsup_{k \to \infty} \frac{|S \cap B_{X}(k)|}{|B_X(k)|}. \]
If the actual limit exists,  we refer to it as the \emph{strict asymptotic density} of $S$ in $G$ with respect to $X$, and we write $\rho_X(S)$ instead of $\displaystyle{\overline{\rho}_X(S)}$.
A subset $S$ of $G$ is \emph{generic} in $G$ (with respect to $X$) if $\rho_X(S)=1$, and 
it is  \emph{negligible} if $\rho_X(S)=0$.

Most of the subsets of free groups for which the asymptotic density has been studied and which could be defined by a natural algebraic property are either generic or negligible
(cf.  \cite{Woess}, \cite{Olshanskii},  \cite{Kapovich2}, \cite{Burillo}, and \cite{Goldstein}). In \cite{Kapovich}, it was proved that the set of test elements of a free group of rank two has intermediate density (different from $0$ and $1$), and the following question was raised.

\begin{question}[{\cite[Problem 5.1]{Kapovich}}]
\label{question}
Let $F$ be a free group of rank $n \geq 3$. Is the set of test elements of $F$ negligible?
\end{question}

A negative answer to Question~\ref{question} will follow from Theorem~\ref{free} and the following Lemma, which is a slight extension of \cite[Proposition~2.1]{Burillo}. 

\begin{lem}
\label{basic}
Let $G$ be a finitely generated group, $X$ a finite generating set of $G$, and $S \subseteq G$.
Suppose that $G=Sg_1 \cup \ldots \cup Sg_m$ for some $g_1, \ldots, g_m \in B_X(C)$. Then
\[\overline{\rho}_X(S) \geq \liminf_{k \to \infty} \frac{|S \cap B_{X}(k)|}{|B_X(k)|} \geq \frac{1}{m|B_X(C)|}.\]
\end{lem}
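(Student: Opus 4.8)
The plan is to exploit the covering $G = Sg_1 \cup \ldots \cup Sg_m$ to compare a ball $B_X(k)$ with the set $S$ shifted by the bounded elements $g_i$. First I would intersect the covering with a ball: since $B_X(k) \subseteq \bigcup_{i=1}^m Sg_i$, we immediately obtain
\[ |B_X(k)| \leq \sum_{i=1}^m |Sg_i \cap B_X(k)|, \]
so at least one of the $m$ translates $Sg_i$ must meet $B_X(k)$ in many points. The geometric heart of the argument is that right translation by an element of bounded length distorts the word metric only by that bound. Concretely, the bijection $h \mapsto hg_i^{-1}$ carries $Sg_i$ onto $S$, and since $|g_i^{-1}|_X = |g_i|_X \leq C$, it sends $Sg_i \cap B_X(k)$ injectively into $S \cap B_X(k+C)$: indeed $|sg_i|_X \leq k$ forces $|s|_X = |(sg_i)g_i^{-1}|_X \leq k + C$. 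Hence $|Sg_i \cap B_X(k)| \leq |S \cap B_X(k+C)|$ for every $i$, and combining with the displayed inequality gives
\[ |S \cap B_X(k+C)| \geq \frac{|B_X(k)|}{m}. \]

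It then remains to convert this into a lower bound on the density ratio at a single radius. For this I would establish the submultiplicativity estimate $|B_X(k+C)| \leq |B_X(C)|\,|B_X(k)|$. This follows because any word of length at most $k+C$ splits as a prefix of length at most $k$ followed by a suffix of length at most $C$, so $B_X(k+C) \subseteq B_X(k)\,B_X(C)$, and the map $(h,u) \mapsto hu$ from $B_X(k) \times B_X(C)$ onto this product set gives the cardinality bound. Dividing the inequality above by $|B_X(k+C)|$ and applying this estimate yields
\[ \frac{|S \cap B_X(k+C)|}{|B_X(k+C)|} \geq \frac{|B_X(k)|}{m\,|B_X(k+C)|} \geq \frac{1}{m\,|B_X(C)|} \]
for every $k \geq 0$; equivalently, the ratio at radius $n$ is bounded below by $\frac{1}{m\,|B_X(C)|}$ for all $n \geq C$. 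Passing to the $\liminf$ as $n \to \infty$ establishes the middle inequality, and $\overline{\rho}_X(S) \geq \liminf$ holds trivially since $\limsup \geq \liminf$, completing the proof.

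The argument is elementary and I do not anticipate a genuine obstacle. The one point requiring care is the passage between radii $k$ and $k+C$: one should avoid a circular comparison by fixing the smaller radius $k$, deriving the bound at radius $k+C$, and only afterward letting $k \to \infty$, rather than attempting to bound $|S \cap B_X(k)|$ at the same radius that appears on the right-hand side. The extension over \cite[Proposition~2.1]{Burillo} lies precisely in allowing the $g_i$ to range over the whole ball $B_X(C)$ rather than over a generating set, which is exactly what the submultiplicativity estimate is designed to accommodate.
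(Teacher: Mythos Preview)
Your proof is correct and follows essentially the same approach as the paper's: both arguments use the covering to bound $|B_X(\cdot)|$ by a sum of $|Sg_i \cap B_X(\cdot)|$, the injection $h \mapsto hg_i^{-1}$ to pass from $Sg_i$ to $S$ at the cost of enlarging the radius by $C$, and the submultiplicativity $|B_X(k+C)| \leq |B_X(k)||B_X(C)|$ to absorb that enlargement. The only cosmetic difference is indexing: the paper works with radii $k-C$ and $k$ (assuming $k \geq C$), whereas you work with $k$ and $k+C$.
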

\begin{proof}
Note that for every $1\leq i \leq m$ and $k \geq C$ , we have an injection from $Sg_i \cap B_X(k-C)$ into $S \cap B_X(k)$ given by $x \to xg_i^{-1}$.
Hence, $|Sg_i \cap B_X(k-C)| \leq |S \cap B_X(k)|$ and
\[\frac{|Sg_i \cap B_X(k-C)|}{|B_X(k-C)|} \leq \frac{|S \cap B_X(k)|}{|B_X(k-C)|} \leq |B_X(C)|\frac{|S \cap B_X(k)|}{|B_X(k)|}.\] 
The last inequality follows from the inequality $|B_X(k)| \leq |B_X(k-C)||B_X(C)|$. 

Clearly, $|B_X(k-C)| \leq |Sg_1 \cap B_X(k-C)| + \ldots + |Sg_m \cap B_X(k-C)|$ for every $k \geq C$. Thus
\[1\leq \frac{|Sg_1 \cap B_X(k-C)|}{|B_X(k-C)|} + \ldots + \frac{|Sg_m \cap B_X(k-C)|}{|B_X(k-C)|} \leq m|B_X(C)| \frac{|S \cap B_X(k)|}{|B_X(k)|}.\]
Now divide by $m|B_X(C)|$ and take $\liminf$. 
\end{proof}

Observe that $S \subseteq G$ is a $C$-net (with respect to $X$) if and only if there exist elements $g_1, \ldots, g_m \in B_X(C)$ such that $G=Sg_1 \cup \ldots \cup Sg_m$. 
Now the following Corollary follows form Lemma~\ref{basic} and the proofs of Theorem~\ref{free} and Theorem~\ref{surface}.
\begin{corx}
\label{density - test elements}
Let $\mathcal{T}$ be the set of test elements of a group $G$. 
\begin{enumerate}[(i)]
\item If $G$ is a free group of rank $n\geq 2$, then 
\[\overline{\rho}_X(\mathcal{T}) \geq \liminf_{k \to \infty}\frac{|\mathcal{T} \cap B_{X}(k)|}{|B_{X}(k)|} \geq  \frac{1}{(2^{n+1}(2^n-1)+1)|B_X(3n-2)|}\]
for every generating set $X$ of $G$.
\item If $G$ is a non-orientable surface group of genus $n \geq 3$, then
\[\overline{\rho}_X(\mathcal{T}) \geq \liminf_{k \to \infty}\frac{|\mathcal{T} \cap B_{X}(k)|}{|B_{X}(k)|} \geq  \frac{1}{6^{n-1}|B_X(5n-5)|}\]
for every generating set $X$ of $G$.
\item If $G$ is an orientable surface group of genus $n \geq 2$, then
\[\overline{\rho}_X(\mathcal{T}) \geq \liminf_{k \to \infty}\frac{|\mathcal{T} \cap B_{X}(k)|}{|B_{X}(k)|} \geq  \frac{1}{|B_X(161n+8 \cdot 25^n(n-1)(16n+1)+33)|^2}\]
for every generating set $X$ of $G$.
\end{enumerate}
\end{corx}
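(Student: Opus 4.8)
The plan is to deduce all three estimates from Lemma~\ref{basic} together with the coset reformulation of the net property recorded just above it: a subset $S \subseteq G$ is a $C$-net with respect to $X$ if and only if $G = Sg_1 \cup \cdots \cup Sg_m$ for some $g_1, \ldots, g_m \in B_X(C)$. Hence, in each of the three cases, the only thing to produce is an explicit covering of $G$ by right translates $\mathcal{T}g_1, \ldots, \mathcal{T}g_m$ of the set of test elements with every $g_i \in B_X(C)$; feeding the resulting pair $(m,C)$ into Lemma~\ref{basic} immediately gives
\[
\overline{\rho}_X(\mathcal{T}) \ \geq\ \liminf_{k \to \infty} \frac{|\mathcal{T}\cap B_X(k)|}{|B_X(k)|} \ \geq\ \frac{1}{m\,|B_X(C)|}.
\]
So everything comes down to counting, in each case, how many coset representatives the corresponding net construction really uses.

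For the orientable surface group (case (iii)) I would use nothing beyond the net constant $C = 161n + 8\cdot 25^n(n-1)(16n+1) + 33$ supplied by Theorem~\ref{surface}(i). The forward direction of the coset reformulation lets one take \emph{all} of $B_X(C)$ as the representative set: if $g \in G$ and $s \in \mathcal{T}$ satisfy $d_X(g,s)\leq C$, then, writing $h = g^{-1}s \in B_X(C)$, we get $g = s h^{-1} \in \mathcal{T}\,h^{-1}$ with $h^{-1}\in B_X(C)$. Thus one may take $m = |B_X(C)|$, and Lemma~\ref{basic} yields the claimed bound $1/|B_X(C)|^2$. No finer analysis of the proof of Theorem~\ref{surface}(i) is required.

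For the free group (case (i)) and the non-orientable surface group (case (ii)) the asserted bounds are strictly better than $1/|B_X(C)|^2$, so the universal count $m = |B_X(C)|$ is too lossy and one must return to the constructions behind Theorem~\ref{free} and Theorem~\ref{surface}(ii). These proofs do more than certify a net constant: given an arbitrary $g \in G$ they produce a nearby test element by right-multiplying $g$ by a single element chosen from an explicit finite list of ``correction'' elements, each of length at most $C$, the choice being dictated by a bounded amount of data attached to $g$ (in the free case this is naturally organized by the image of $g$ in $H_1(F;\mathbb{F}_2)$, which accounts for the appearance of $2^n$). The members of this list are exactly the coset representatives $g_1,\ldots,g_m$, so the remaining task is purely enumerative: one verifies that the free-group construction uses $m = 2^{n+1}(2^n-1)+1$ representatives inside $B_X(3n-2)$ and that the non-orientable construction uses $m = 6^{n-1}$ representatives inside $B_X(5n-5)$. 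Substituting each pair $(m,C)$ into Lemma~\ref{basic} gives the two remaining inequalities.

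The one genuinely non-formal point, and the main obstacle, is that the corollary is not a consequence of the \emph{statements} of Theorems~\ref{free} and~\ref{surface}: the bare net constant $C$ yields only the crude estimate $1/|B_X(C)|^2$ used in case (iii). Recovering the small values $2^{n+1}(2^n-1)+1$ and $6^{n-1}$ forces one to trace, through each construction, precisely which finite family of bounded correction elements is invoked, and to confirm that distinct cases are not being silently merged or double-counted. This bookkeeping is the delicate part; once the covering families are pinned down, the passage to the density bounds is the single invocation of Lemma~\ref{basic} displayed above.
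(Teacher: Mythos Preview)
Your proposal is correct and matches the paper's own approach: the paper states outright that the corollary follows from Lemma~\ref{basic} together with the \emph{proofs} (not merely the statements) of Theorems~\ref{free} and~\ref{surface}, which is precisely the distinction you draw. Your observation that case~(iii) needs no tracing through the construction---taking all of $B_X(C)$ as the representative set already yields $m=|B_X(C)|$ and hence the bound $1/|B_X(C)|^2$---is a clean shortcut, while for (i) and (ii) you correctly identify that the sharper constants $2^{n+1}(2^n-1)+1$ and $6^{n-1}$ can only come from enumerating the finite family of correction words produced in those proofs.
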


A \emph{retract} of a group $G$ is a subgroup $R \leq G$ for which there exists a homomorphism $r:G \to R$, called a \emph{retraction}, such that $r(g)=g$ for all $g \in R$.
We say that a group $G$ is a \emph{Turner group} if it satisfies the \emph{Retract Theorem}: an element $g\in G$ is a test element of $G$ if and only if it is not contained in any proper retract of $G$. 
Free groups of finite rank were the first examples of  Turner groups  (see \cite{Turner}).
Other examples are finitely generated Fuchsian groups \cite{O'Neill} and torsion free hyperbolic groups \cite{Groves}. 
The fundamental group of the Klein bottle $\langle a, b \mid aba^{-1}=b^{-1} \rangle $ is an example of a group which is not  a Turner group (see \cite{Turner}). However, all other surface groups  are Turner groups.

\begin{corx}
\label{density - retracts}
Let $\mathcal{R}$ be the union of all proper retracts of a group $G$. 
\begin{enumerate}[(i)]
\item If $G$ is a free group of rank $n\geq 2$, then 
\[\overline{\rho}_X(\mathcal{R}) \leq 1 - \frac{1}{(2^{n+1}(2^n-1)+1)|B_X(3n-2)|}\]
for every generating set $X$ of $G$.
\item If $G$ is a non-orientable surface group of genus $n \geq 3$, then
\[\overline{\rho}_X(\mathcal{R}) \leq  1 - \frac{1}{6^{n-1}|B_X(5n-5)|}\]
for every generating set $X$ of $G$.
\item If $G$ is an orientable surface group of genus $n \geq 2$, then
\[\overline{\rho}_X(\mathcal{R}) \leq  1 - \frac{1}{|B_X(161n+8 \cdot 25^n(n-1)(16n+1)+33)|^2}\]
for every generating set $X$ of $G$.
\end{enumerate}
\end{corx}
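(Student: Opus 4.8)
The plan is to reduce Corollary~\ref{density - retracts} entirely to Corollary~\ref{density - test elements} by way of the Retract Theorem. First I would record that each group in question is a Turner group: free groups of rank $n \geq 2$ are Turner groups by Turner's theorem, while the orientable surface groups of genus $n \geq 2$ and the non-orientable surface groups of genus $n \geq 3$ are Turner groups as finitely generated Fuchsian groups. The only surface group that fails this property is the Klein bottle group, i.e.\ the non-orientable group of genus $2$, which lies outside the ranges considered here. Hence in every case the Retract Theorem applies and yields the set-theoretic identity $\mathcal{R} = G \setminus \mathcal{T}$: an element fails to be a test element exactly when it is contained in some proper retract.

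Next I would pass to densities. Since $\mathcal{R}$ and $\mathcal{T}$ partition $G$, for each $k$ one has the exact count
\[\frac{|\mathcal{R} \cap B_X(k)|}{|B_X(k)|} = 1 - \frac{|\mathcal{T} \cap B_X(k)|}{|B_X(k)|}.\]
Taking $\limsup$ as $k \to \infty$ and using the elementary identity $\limsup_{k}(1 - a_k) = 1 - \liminf_{k} a_k$, this gives
\[\overline{\rho}_X(\mathcal{R}) = 1 - \liminf_{k \to \infty} \frac{|\mathcal{T} \cap B_X(k)|}{|B_X(k)|}.\]

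Finally I would substitute the three lower bounds for $\liminf_{k} |\mathcal{T} \cap B_X(k)|/|B_X(k)|$ supplied by parts (i)--(iii) of Corollary~\ref{density - test elements}; subtracting each from $1$ produces precisely the corresponding upper bound for $\overline{\rho}_X(\mathcal{R})$ asserted in parts (i)--(iii). The argument is essentially formal once the Turner property is available. The only point requiring genuine care is the verification that every group in the stated ranges satisfies the Retract Theorem, so that the complementation $\mathcal{R} = G \setminus \mathcal{T}$ is legitimate; this is where the exclusion of the Klein bottle group is essential, and it is the sole substantive ingredient beyond the preceding corollary.
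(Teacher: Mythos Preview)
Your argument is correct and matches the paper's implicit derivation: the paper places this corollary immediately after the discussion establishing that free groups and the surface groups in the stated ranges are Turner groups, so that $\mathcal{R}=G\setminus\mathcal{T}$, and then the bounds follow at once from Corollary~\ref{density - test elements} via the complementation identity $\overline{\rho}_X(\mathcal{R})=1-\liminf_k |\mathcal{T}\cap B_X(k)|/|B_X(k)|$. There is nothing to add.
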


Let $F$ be a free group of rank $n \geq 2$ with basis $X$, and let $\mathcal{T}$ and $\mathcal{R}$ be, respectively, the set of test elements and the union of all proper retracts of $F$.
It follows from \cite[Theorem~B]{Kapovich} that  
\[ \overline{\rho}_X(\mathcal{T}) \leq 1- \frac{4n-4}{(2n-1)^2 \zeta (n)},\]
where $\zeta(z)=\sum_{n=1}^{\infty} n^{-z}$ is the Riemann zeta-function. Consequently, 
\[ \overline{\rho}_X(\mathcal{R}) \geq \liminf_{k \to \infty}\frac{|\mathcal{R} \cap B_{X}(k)|}{|B_{X}(k)|} \geq \frac{4n-4}{(2n-1)^2 \zeta (n)}.\]
It follows that $\mathcal{T}$ and $\mathcal{R}$ have intermediate densities in $F$.
In comparison, it is interesting to note that the union of all proper free factors of $F$ is negligible (this follows from \cite{Stallings2} and \cite{Burillo}). 

Recall that the \emph{exponential growth} of a finitely generated group $G$ with a finite generating set $X$ is defined as $\displaystyle{w_X(G)=\lim_{k \to \infty}\sqrt[k]{|B_X(k)|}}$. More generally, the 
\emph{exponential growth} of  a subset $S$ of $G$ is defined as  $\displaystyle{w_X(S)=\limsup_{k \to \infty}\sqrt[k]{|S \cap B_X(k)|}}$ (here we are forced to take $\limsup$ since the limit may not exist).
It is easy to see that if $\displaystyle{\liminf_{k \to \infty}\frac{|\mathcal{S} \cap B_{X}(k)|}{|B_{X}(k)|} > 0}$, then $\displaystyle{w_X(S)=\lim_{k \to \infty}\sqrt[k]{|S \cap B_X(k)|}=w_X(G)}$. 
In particular, $w_X(\mathcal{T})=w_X(G)$ for $G$ and $\mathcal{T}$ as in Corollary~\ref{density - test elements}. Furthermore, $w_X(\mathcal{R})=w_X(G)$ when $G$ is a 
free group of rank $n \geq 2$ and $\mathcal{R}$ is the union of all proper retracts of $G$.

\medskip
Recall that the \emph{profinite topology} on a group $G$ is the topology with basis 
\[\{gN \mid g \in G, N \unlhd G \text{ and } |G:N| < \infty\}.\] 
By \cite[Theorem~7.5]{Slobodan}, the set of test elements of a free group of finite rank is dense in the profinite topology.  
Here we prove the same for surface groups.

\begin{thmx}
\label{surface - dense}
Let $G$ be an orientable surface group of genus $n \geq 2$ or a non-orientable surface group of genus $n \geq 3$. Then
the set of test elements of $G$  is dense in the profinite topology.
\end{thmx}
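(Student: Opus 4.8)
The plan is to mirror the free-group argument of \cite[Theorem~7.5]{Slobodan}, reducing the statement to a coset problem and then transferring information from a suitable pro-$p$ completion. First I would reformulate density: since the basic open sets are the cosets $gN$ with $N\unlhd G$ of finite index, the set $\mathcal{T}$ of test elements is dense in the profinite topology if and only if every such coset contains a test element (equivalently, $\mathcal{T}$ surjects onto every finite quotient of $G$). So fix $N\unlhd G$ of finite index, put $Q=G/N$, and fix $g\in G$; the goal is to produce a test element in $gN$.

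The key device is to choose the prime carefully. Let $\widehat{G}_p$ denote the pro-$p$ completion of $G$ and $\pi_p\colon G\to\widehat{G}_p$ the canonical map. I would pick a prime $p$ with $p\nmid[G:N]$ for which $G$ is residually-$p$ and $\widehat{G}_p$ is a Demushkin group; for orientable surface groups of genus $n\ge 2$ every prime works, and for non-orientable groups of genus $n\ge 3$ every sufficiently large odd prime works, so such a $p$ coprime to $[G:N]$ always exists. The point of coprimality is the following. The subgroup $\overline{\pi_p(N)}$ has finite index in the finitely generated pro-$p$ group $\widehat{G}_p$, hence is open, and the finite $p$-group $\widehat{G}_p/\overline{\pi_p(N)}$ is a quotient of $Q=G/N$, so its order divides $[G:N]$; since $p\nmid[G:N]$ this order is $1$. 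Thus $\pi_p(N)$ is dense in $\widehat{G}_p$, and therefore so is $\pi_p(gN)=\pi_p(g)\,\pi_p(N)$.

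Next I would import two facts from \cite{Slobodan}. The first is a transfer principle: if the image $\pi_p(t)$ avoids every proper retract of the Demushkin group $\widehat{G}_p$, then $t$ avoids every proper retract of $G$, and since $G$ is a Turner group (the Retract Theorem holds for all the surface groups under consideration) this means $t$ is a test element of $G$. The second, and this is the heart of the matter, is that the pro-$p$ techniques of \cite{Slobodan} yield a \emph{nonempty open} set $\mathcal{U}\subseteq\widehat{G}_p$ all of whose elements lie outside every proper retract of $\widehat{G}_p$. Granting these, the argument closes at once: $\pi_p(gN)$ is dense and $\mathcal{U}$ is open and nonempty, so $\pi_p(gN)\cap\mathcal{U}\neq\varnothing$; choosing $z\in N$ with $\pi_p(gz)\in\mathcal{U}$ produces a test element $t=gz$ of $G$ in the prescribed coset $gN$. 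As $N$ and $g$ were arbitrary, $\mathcal{T}$ is dense.

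The main obstacle is the second import, namely locating an open set of test elements in $\widehat{G}_p$. Proper retracts of $\widehat{G}_p$ are free pro-$p$ of infinite index, and their images in $H_1(\widehat{G}_p,\F_p)$ are totally isotropic for the form determined by the Demushkin relation; hence any element whose Frattini image is anisotropic automatically avoids every proper retract. In the non-orientable (odd, quadratic) case this already exhibits an open set of test elements. In the orientable case the form is symplectic, every vector is isotropic, and a single homological condition is insufficient, so one must use the finer retract analysis of \cite{Slobodan} to produce an open set of test elements at infinite pro-$p$ level -- this is the delicate step reflected in the large constant of Theorem~\ref{surface}(i). Once such an open set is available, it is the coprime-prime density trick above that upgrades the pro-$p$ information to genuine density in the full profinite topology.
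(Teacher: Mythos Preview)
Your reduction to cosets and the choice of a prime $p$ coprime to $[G:N]$ are exactly right, and your observation that $\pi_p(N)$ is then dense in $\widehat{G}_p$ is both correct and elegant. The proposal, however, hinges entirely on the existence of a nonempty \emph{open} set $\mathcal{U}$ of test elements in $\widehat{G}_p$, and this is a genuine gap. You flag it yourself as ``the main obstacle'' and defer to \cite{Slobodan}, but no such statement for Demushkin groups is proved there or in this paper. In the orientable case the cup-product form on $H^1$ is symplectic, so every vector in $G/\Phi(G)$ is isotropic and no first-level homological condition can isolate an open set; and Lemma~\ref{extension - Demushkin} does \emph{not} supply one either, because the auxiliary subset $\{a_{i_1},\dots,a_{i_m}\}$ there depends on the particular $w\in\Phi^{2}(\widehat{G}_p)$ (via the choice of a retract $R\ni w$ and of $H\le R$), so it does not yield a fixed open neighbourhood that works uniformly. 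There is also a small slip in the non-orientable discussion: for odd $p$ the relator $x_1^{2}\cdots x_n^{2}$ is primitive in $\widehat{F}_p$, so $\widehat{G}_p$ is free pro-$p$ of rank $n-1$, not Demushkin, and there is no quadratic form to invoke; this case simply reduces to the free pro-$p$ argument.

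The paper sidesteps the open-set issue by arguing constructively rather than topologically. Instead of showing that the dense set $\pi_p(gN)$ hits a fixed open target, it moves $g$ \emph{within} the coset $gN$ to a specific test element. Since $p\nmid l=[G:N]$, the integer $l$ is a unit in $\Z_p$; multiplying by suitable $l$-th powers of the generators (all of which lie in $N$) one first pushes $g$ into $\Phi(\widehat{G}_p)$, then into $\Phi^{2}(\widehat{G}_p)$, and finally applies Lemma~\ref{extension - Demushkin} together with Proposition~\ref{powers} (the latter precisely so that the correcting exponents may be taken divisible by $l$) to land on a test element $t\in gN$. Thus coprimality is exploited arithmetically (``$l$ is invertible modulo $p$'') rather than topologically (``$\pi_p(N)$ is dense''), and no uniform open set of test elements is required.
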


\section{Preliminaries}
In this section, for the convenience of the reader, we collect the results from \cite{Slobodan} that will be used in the paper.  
We start by fixing some notation.
Throughout the paper, $p$ denotes a prime. The $p$-adic integers are denoted by $\Z_p$.
Subgroup $H$ of a pro-$p$ group $G$ is tacitly taken to be closed and by generators we mean topological generators as
appropriate. If $G$ is a pro-$p$ group and $X \subseteq G$, then $\langle X \rangle$ is the closed subgroup of $G$ topologically generated by $X$.  
The minimal number of generators of a pro-$p$ group $G$ is denoted by $d(G)$. Homomorphisms between pro-$p$ groups are always assumed to be continuous.
We write $\Phi^{n}(G)$ for the n-th term of the Frattini series of a pro-$p$ group $G$. 

Let $G$ be a discrete group. We denote by $\jmath_{p}:G \to \widehat{G}_{p}$ the pro-$p$ completion of $G$. 
If $G$ is residually finite-$p$, then $\jmath_p$ is an embedding and we identify $\jmath_{p}(G)$ with $G$. 
As before, $F(x_1, \ldots, x_n)$ stands for a free discrete group with basis $X=\{x_1, \ldots, x_n \}$. The pro-$p$ completion of $F$ is
denoted by $\widehat{F}_p(x_1, \ldots, x_n)$ and it is a free pro-$p$ group with basis $X$.  

\begin{thm}[ {\cite[Theorem~3.5  and  Corollary~3.6]{Slobodan}} ]
\label{Turner}
\begin{enumerate}[(a)]
\item The test elements of a finitely generated  profinite group $G$ are exactly the elements not contained in any proper retract of $G$.
\item The only retracts of a free pro-$p$ group are the free factors.
The test elements of a free pro-$p$ group $\widehat{F}_p$ of finite rank are the elements not contained in any proper free factor of $\widehat{F}_p$.
\end{enumerate}
\end{thm}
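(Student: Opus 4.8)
The plan is to prove the two parts separately and then read off the final characterisation of test elements in the free pro-$p$ case by combining them. Throughout, $G$ is a finitely generated profinite group and all endomorphisms are continuous. I will use repeatedly that finitely generated profinite groups are Hopfian, i.e.\ every surjective continuous endomorphism is an automorphism.

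For part (a), one implication is formal: if $g$ is a test element lying in a proper retract $R$ with retraction $r\colon G\to R$, then $r$ fixes $g$, so $r$ is an automorphism, contradicting $r(G)=R\neq G$. For the converse I would use that the monoid $\operatorname{End}(G)$ is compact. Fixing topological generators $x_1,\dots,x_n$ embeds $\operatorname{End}(G)$ into $G^n$ via $\phi\mapsto(\phi(x_1),\dots,\phi(x_n))$; the map is injective since the $x_i$ generate $G$, and its image is closed because each defining relation $w(g_1,\dots,g_n)=1$ is a closed condition, so the image is compact and composition is continuous. Given $\phi$ with $\phi(g)=g$, the closure $T=\overline{\{\phi^k:k\geq 1\}}$ is a compact commutative submonoid, and every compact Hausdorff semigroup contains an idempotent, giving $e=e^2\in T$. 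Being a limit of powers $\phi^k$, each fixing $g$, the endomorphism $e$ satisfies $e(g)=g$; and $e^2=e$ makes $e$ a retraction onto $R:=e(G)$, with $g=e(g)\in R$. Since $g$ lies in no proper retract, $R=G$, so $e$ is surjective, hence an automorphism by the Hopfian property, and being idempotent forces $e=\id$. Thus $\id\in T$: choosing $\phi^{m_k}\to\id$ (we may assume $m_k\to\infty$, else $\phi^m=\id$ already) and passing to a subsequence with $\phi^{m_k-1}\to\psi\in T$, continuity yields $\phi\circ\psi=\id$, so $\phi$ is surjective and hence an automorphism.

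For part (b), a free factor of $\widehat F_p$ is a retract via the projection killing the complementary factor, so the content is the reverse inclusion. Let $R$ be a retract with inclusion $\iota$ and retraction $r$. For any $R$-module $M$, regarded as a $G$-module through $r$, the composite of $\iota^*\colon H^n(G,M)\to H^n(R,M)$ after $r^*\colon H^n(R,M)\to H^n(G,M)$ is the identity, so $H^n(R,M)$ is a direct summand of $H^n(G,M)$; as $\cd_p\widehat F_p\leq 1$ this forces $\cd_p R\leq 1$, whence $R$ is free pro-$p$. I would then extend a basis of $R$ to a basis of $G$ through Frattini quotients. The retraction splits $G/\Phi(G)\cong R/\Phi(R)\oplus\ker\bar r$, where $\bar r$ is the induced map; lifting a basis $y_1,\dots,y_m$ of $R$ and lifts $z_1,\dots,z_{n-m}$ of a basis of $\ker\bar r$ produces $n=d(G)$ elements generating $G$ modulo $\Phi(G)$. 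Since a minimal generating set of a free pro-$p$ group is automatically a basis (again by the Hopfian property), $y_1,\dots,y_m,z_1,\dots,z_{n-m}$ is a basis of $G$; because $\langle y_1,\dots,y_m\rangle$ already generates $R$ modulo $\Phi(R)$ it equals $R$, so $G=R*\langle z_1,\dots,z_{n-m}\rangle$ exhibits $R$ as a free factor.

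The final sentence of (b) is then immediate: by part (a) the test elements of $\widehat F_p$ are exactly the elements avoiding every proper retract, and by the equivalence just established these are precisely the elements avoiding every proper free factor. I expect the main obstacle to be the converse direction of (a), where producing the retraction $e$ rests on the compact-semigroup input (existence of idempotents in compact monoids together with continuity of composition in $\operatorname{End}(G)$); the delicate points are verifying that the limiting idempotent still fixes $g$ and that its image is genuinely a retract, so that the hypothesis on $g$ may be invoked.
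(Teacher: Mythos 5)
Your proof is correct, but note that this paper does not actually prove Theorem~\ref{Turner}: it is imported without proof from \cite[Theorem~3.5 and Corollary~3.6]{Slobodan}, so the comparison must be with that source, whose strategy you have essentially reconstructed. For part (a) your argument is the standard one: $\operatorname{End}(G)$ of a finitely generated profinite group is a compact (indeed profinite) monoid with jointly continuous composition, the closed subsemigroup $T=\overline{\{\phi^k : k\geq 1\}}$ contains an idempotent $e$ by the Ellis--Numakura lemma, $e(G)$ is a retract containing $g$ (its image is closed by compactness, and $e^2=e$ makes $e|_{e(G)}$ the identity), and Hopficity of finitely generated profinite groups converts surjectivity of $\phi$ --- extracted from $\id\in T$ via $\phi\circ\psi=\id$ --- into bijectivity; one small slip is that $T$ is a priori only a closed subsemigroup, not a submonoid, but you never use the monoid structure, and since $G$ is finitely generated it is second countable, so your use of sequences rather than nets is legitimate. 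For part (b) your Frattini-quotient splitting $G/\Phi(G)\cong R/\Phi(R)\oplus\ker\bar r$ is exactly the right mechanism; observe that the injectivity of $R/\Phi(R)\to G/\Phi(G)$ that it encodes is the $n=1$ case of Lemma~\ref{retract fratini intersection} of this paper. Your one genuine detour is establishing that a retract $R$ of $\widehat{F}_p$ is free pro-$p$ via cohomological dimension and the splitting $\iota^*\circ r^*=\id$; this is correct, but it can be replaced by a one-line appeal to the pro-$p$ Nielsen--Schreier theorem (every closed subgroup of a free pro-$p$ group is free pro-$p$), which is in any case the theorem underlying the $\cd_p\leq 1$ characterization you invoke. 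The remaining steps --- a minimal generating set of size $d(G)$ in a free pro-$p$ group is a basis by Hopficity, hence $G=R*\langle z_1,\dots,z_{n-m}\rangle$, and the final characterization of test elements by combining (a) with (b) --- are all sound.
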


The following proposition allows us to transfer results related to test elements from the pro-$p$ to the discrete category.

\begin{pro}[{\cite[Proposition~7.1]{Slobodan}}]
\label{pro-p to discrete}
Let $p$ be a prime, and let $G$ be a finitely generated residually finite-$p$ Turner group. 
If $w \in G$ is a test element of $\widehat{G}_p$, then $w$ is a test element of $G$. 
\end{pro}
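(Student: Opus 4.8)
The plan is to reduce the statement to an assertion about retracts and then transport a proper retract of $G$ containing $w$ upward to its pro-$p$ completion. Since $G$ is residually finite-$p$, the map $\jmath_p \colon G \to \widehat{G}_p$ is injective, and I regard $G$, and in particular $w$, as sitting inside $\widehat{G}_p$. Because $G$ is a Turner group, $w$ is a test element of $G$ precisely when $w$ lies in no proper retract of $G$; and by Theorem~\ref{Turner}(a), $w$ is a test element of $\widehat{G}_p$ precisely when $w$ lies in no proper retract of $\widehat{G}_p$. Thus it suffices to prove the contrapositive reformulation: if $w$ belongs to some proper retract of $G$, then $w$ belongs to some proper retract of $\widehat{G}_p$.

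So assume $w \in R$, where $r \colon G \to R$ is a retraction onto a proper subgroup $R$. First I would replace $r$ by the idempotent endomorphism $e \colon G \to G$ obtained by composing $r$ with the inclusion $R \hookrightarrow G$; it satisfies $e^2 = e$, has image $R$, and fixes $R$ pointwise, so in particular $e(w) = w$. By functoriality of the pro-$p$ completion, $e$ induces a continuous endomorphism $\widehat{e}$ of $\widehat{G}_p$ with $\widehat{e} \circ \jmath_p = \jmath_p \circ e$; since completion is a functor and $e^2 = e$, the map $\widehat{e}$ is again idempotent. Consequently its image $\widehat{R} := \widehat{e}(\widehat{G}_p)$ is a closed subgroup on which $\widehat{e}$ restricts to the identity, that is, $\widehat{R}$ is a retract of $\widehat{G}_p$, and $\widehat{e}(w) = \jmath_p(e(w)) = w$ shows $w \in \widehat{R}$. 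A short compactness argument identifies $\widehat{R}$ with the closure of $R$ in $\widehat{G}_p$.

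The one substantive point, and the step I expect to be the main obstacle, is verifying that $\widehat{R}$ is a \emph{proper} retract of $\widehat{G}_p$: a priori $R$ could be dense and $\widehat{R}$ could swallow all of $\widehat{G}_p$. Here the residual finiteness-$p$ hypothesis does the work. Suppose $\widehat{R} = \widehat{G}_p$, so that $R$ is dense in $\widehat{G}_p$. The fixed-point set $\{x \in \widehat{G}_p \mid \widehat{e}(x) = x\}$ is closed, being the equalizer of the continuous maps $\widehat{e}$ and $\id$ into the Hausdorff group $\widehat{G}_p$, and it contains the dense set $R$; hence it is all of $\widehat{G}_p$, and $\widehat{e} = \id$. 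Composing with $\jmath_p$ gives $\jmath_p \circ e = \jmath_p$, and the injectivity of $\jmath_p$ forces $e = \id_G$, so $R = e(G) = G$, contradicting the properness of $R$. Therefore $\widehat{R} \neq \widehat{G}_p$.

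Putting this together, $\widehat{R}$ is a proper retract of $\widehat{G}_p$ containing $w$, so $w$ is not a test element of $\widehat{G}_p$ by Theorem~\ref{Turner}(a). This establishes the contrapositive and hence the proposition. The only place requiring care beyond routine functoriality is the properness argument above, where residual finiteness-$p$ is genuinely used; everything else is formal manipulation of idempotent endomorphisms and their images.
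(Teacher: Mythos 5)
Your proof is correct, and it is essentially the argument behind the result: note that this paper states the proposition without an inline proof, deferring to \cite[Proposition~7.1]{Slobodan}, where the same retract-transport strategy is used --- pass from the retraction to the idempotent $e=\iota\circ r$, complete functorially to an idempotent $\widehat{e}$ whose image $\overline{R}$ is a closed retract of $\widehat{G}_p$ containing $w$, and use injectivity of $\jmath_p$ (residual finiteness-$p$) to rule out $\overline{R}=\widehat{G}_p$. Your handling of the one delicate point is sound: properness of $\overline{R}$ follows exactly as you say, since $\widehat{e}$ fixes the dense set $\jmath_p(R)$ pointwise, forcing $\widehat{e}=\id$ and hence $e=\id_G$, while on the profinite side only the trivial direction of Theorem~\ref{Turner}(a) is needed (membership in a proper closed retract already defeats testness).
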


Note that orientable surface groups and non-orientable surface groups of genus $n\geq 4$ are residually free, and therefore residually finite-$p$ for every prime $p$ since free groups are residually finite-$p$ (see \cite{Baumslag1} and \cite{Benjamin-Baumslag}).  On the other hand, the non-orientable surface group of genus 3 is finite-$p$ by \cite[Lemma~8.9]{Minasyan}.
Hence, free groups, orientable surface groups and non-orientable surface groups of genus $n\geq 3$ satisfy the hypothesis of Proposition~\ref{pro-p to discrete}.  

\begin{pro}[{\cite[Proposition~5.13 c)]{Slobodan}}]
\label{free - test element}
The element $x_1^p \ldots x_n^p$ is a test element of $\widehat{F}_p(x_1, \ldots, x_n)$.
\end{pro}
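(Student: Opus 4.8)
The plan is to argue directly from the definition of a test element. Set $w=x_1^p\cdots x_n^p$ and let $\phi$ be any endomorphism of $\widehat{F}_p=\widehat{F}_p(x_1,\dots,x_n)$ with $\phi(w)=w$; I must show $\phi$ is an automorphism. Since $\widehat{F}_p$ is a free pro-$p$ group of finite rank it is Hopfian, and by the Burnside basis theorem $\phi$ is an automorphism if and only if it is surjective, if and only if the $\F_p$-linear map it induces on the Frattini quotient $\widehat{F}_p/\Phi(\widehat{F}_p)\cong\F_p^{\,n}$ is invertible. Writing $y_i=\phi(x_i)$, let $A=(a_{ji})\in M_n(\F_p)$ be the matrix of this induced map, so that $y_i\equiv x_1^{a_{1i}}\cdots x_n^{a_{ni}}\pmod{\Phi(\widehat{F}_p)}$. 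Thus everything reduces to proving $\det A\neq 0$.

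To see the $p$-th powers I would pass to the Magnus embedding $\mu\colon\widehat{F}_p\hookrightarrow \F_p\langle\langle X_1,\dots,X_n\rangle\rangle$, $x_i\mapsto 1+X_i$, into the completed free associative $\F_p$-algebra with its grading by degree. Because the ambient algebra has characteristic $p$, $(1+X_i)^p=1+X_i^p$, so
\[
\mu(w)=\prod_{i=1}^n(1+X_i^p)=1+\sum_{i=1}^n X_i^p+(\text{terms of degree}\ge 2p),
\]
whose lowest nonconstant homogeneous part is $\sum_i X_i^p$ in degree $p$. On the other hand, writing $\mu(y_i)=1+v_i$ with linear part $\ell_i=\sum_j a_{ji}X_j$, the same characteristic-$p$ identity gives $\mu(y_i)^p=1+v_i^p$, and since $v_i=\ell_i+(\text{degree}\ge 2)$ the degree-$p$ part of $v_i^p$ is exactly $\ell_i^{\,p}$. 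As products of two or more of the $v_i^p$ have degree at least $2p$, the degree-$p$ part of $\mu(\phi(w))=\prod_i\mu(y_i)^p$ is $\sum_i\ell_i^{\,p}$. Comparing degree-$p$ parts in $\mu(\phi(w))=\mu(w)$ yields the identity
\[
\sum_{i=1}^n\Bigl(\sum_{j=1}^n a_{ji}X_j\Bigr)^{p}=\sum_{i=1}^n X_i^{p}
\]
in the degree-$p$ component of $\F_p\langle\langle X_1,\dots,X_n\rangle\rangle$.

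The clean final step is pure linear algebra. Identifying the degree-$p$ component with $V^{\otimes p}$ for $V=\F_p^{\,n}$ via $X_j\leftrightarrow e_j$, the displayed identity becomes $\sum_i (Ae_i)^{\otimes p}=\sum_i e_i^{\otimes p}$. I would now evaluate both sides on the functional $e_j^{*}\otimes (e_k^{*})^{\otimes(p-1)}$. On the left each summand contributes $a_{ji}\,a_{ki}^{\,p-1}$, and since $t^{p-1}\in\{0,1\}$ for $t\in\F_p$, equal to $1$ precisely when $t\neq 0$, this is $a_{ji}B_{ki}$, where $B\in M_n(\F_p)$ is the support indicator defined by $B_{ki}=1$ if $a_{ki}\neq 0$ and $B_{ki}=0$ otherwise; summing over $i$ gives $(AB^{\top})_{jk}$. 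On the right only the term $i=j=k$ survives, giving $\delta_{jk}$. Hence $AB^{\top}=I$, so $A$ has a right inverse and is therefore invertible. This forces $\phi$ to be an automorphism, so $w$ is a test element.

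The point I expect to be the real obstacle is locating an invariant fine enough to detect invertibility of $A$. Working modulo $\Phi(\widehat{F}_p)$, or in the abelianization, sees only that $A$ fixes the all-ones vector, and passing to the symmetric (commutative) degree-$p$ piece collapses under Frobenius to the same row-sum condition. The essential move is therefore to retain the full noncommutative degree-$p$ tensor and to test it against the \emph{nonsymmetric} functional $e_j^{*}\otimes(e_k^{*})^{\otimes(p-1)}$: the $(p-1)$-fold repetition turns the entries $a_{ki}$ into their support indicators and produces the one-sided inverse $B^{\top}$, which is exactly what abelian or symmetric data cannot supply.
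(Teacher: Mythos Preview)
The paper does not actually prove this proposition; it is quoted from \cite{Slobodan} as Proposition~5.13~(c), so there is no in-paper argument to compare against. What matters, then, is whether your proof stands on its own, and it does.

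Your argument is correct. The key steps all check out: since $1$ is central, $(1+v_i)^p=1+v_i^p$ in characteristic $p$; the degree-$p$ part of $v_i^p$ is $\ell_i^p$ because any word in $\ell_i,h_i$ of length $p$ containing at least one $h_i$ has degree $\ge p+1$; and the product $\prod_i(1+v_i^p)$ contributes only $\sum_i\ell_i^p$ in degree $p$. The tensor identity $\sum_i(Ae_i)^{\otimes p}=\sum_i e_i^{\otimes p}$ then follows, and pairing with $e_j^{*}\otimes(e_k^{*})^{\otimes(p-1)}$ gives $\sum_i a_{ji}a_{ki}^{\,p-1}=\delta_{jk}$, i.e.\ $AB^{\top}=I$ with $B_{ki}=a_{ki}^{\,p-1}\in\{0,1\}$ by Fermat. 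Hence $A\in\GL_n(\F_p)$ and $\phi$ is an automorphism.

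Two minor remarks. First, you do not actually need the Magnus map $\mu$ to be injective: you only use that it is a continuous homomorphism from $\widehat{F}_p$ to $1+I\subseteq\F_p\langle\langle X_1,\dots,X_n\rangle\rangle^{\times}$, so that $\phi(w)=w$ forces equality of images. Second, the passage from ``linear part of $\mu(y_i)$ equals $\ell_i$'' to the matrix $A$ of $\phi$ on $\widehat{F}_p/\Phi(\widehat{F}_p)$ uses that $\mu$ induces an isomorphism $\widehat{F}_p/\Phi(\widehat{F}_p)\cong I/I^{2}$; this is standard but worth a word. Your closing diagnosis is exactly right: any symmetric or abelian reduction collapses to the row-sum condition, and it is the genuinely noncommutative functional that extracts the one-sided inverse.
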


\begin{pro}[{\cite[Proposition~5.7]{Slobodan}}]
\label{powers}
If $u$ is a test element of $\widehat{F}_p(x_1, \ldots, x_n)$, then for any $0 \neq \alpha_i \in \mathbb{Z}_p$, $1 \leq i \leq n$, $ u(x_1^{\alpha_1}, \ldots, x_n^{\alpha_n})$ is also a test element of $\widehat{F}_p$.
\end{pro}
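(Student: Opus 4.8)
The plan is to argue through the free-factor characterization of test elements furnished by Theorem~\ref{Turner}(b): in a free pro-$p$ group of finite rank, an element is a test element precisely when it lies in no proper free factor. Write $G=\widehat{F}_p(x_1,\ldots,x_n)$ and let $\phi\colon G\to G$ be the continuous endomorphism determined by $x_i\mapsto x_i^{\alpha_i}$, so that the element in question is $v=\phi(u)$. The first step is to identify the image of $\phi$: I claim $\phi$ is injective with image the free pro-$p$ group $H=\langle x_1^{\alpha_1},\ldots,x_n^{\alpha_n}\rangle$ of rank $n$, the displayed elements forming a basis. Since each $\alpha_i\neq 0$, the ideal $\alpha_i\Z_p$ has finite index in $\Z_p$, so in the abelianization $G^{\mathrm{ab}}\cong\Z_p^n$ the image of $H$ is the finite-index submodule $\bigoplus_i \alpha_i\Z_p\,\overline{x}_i$, which has full rank $n$; consequently $d(H)\geq n$. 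As $H$ is generated by the $n$ elements $x_i^{\alpha_i}$ and is itself free pro-$p$ (closed subgroups of free pro-$p$ groups are free pro-$p$), we get $d(H)=n$, whence a generating set of size $n$ is a basis and $\phi$ is an isomorphism onto $H$.

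I then prove the contrapositive: if $v=\phi(u)$ lies in a proper free factor of $G$, so does $u$. Write $G=M\ast N$ with $N\neq 1$; then $\mathrm{rk}(M)+\mathrm{rk}(N)=n$ and $\mathrm{rk}(N)\geq 1$, so $m:=\mathrm{rk}(M)<n$. The key move is to intersect with $H$: by the pro-$p$ Kurosh subgroup theorem applied to the finitely generated subgroup $H\leq M\ast N$, the intersection $M\cap H$ is a free factor of $H$. This free factor is proper, i.e.\ $H\not\subseteq M$: again reading off the abelianization, the image of $H$ in $\Z_p^n$ has rank $n$, whereas the image of the free factor $M$ is a direct summand of rank $m<n$, so $H$ cannot lie in $M$. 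Finally, since $\phi\colon G\to H$ is an isomorphism, $\phi^{-1}$ carries free factors of $H$ to free factors of $G$, so $\phi^{-1}(M\cap H)$ is a \emph{proper} free factor of $G$. As $v\in M\cap H$ (being in both $M$ and $H$), we get $u=\phi^{-1}(v)\in\phi^{-1}(M\cap H)$, exhibiting $u$ inside a proper free factor and contradicting the hypothesis that $u$ is a test element. Hence $v$ lies in no proper free factor and is a test element.

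The main obstacle is the intersection step, for the pro-$p$ analogue of the Kurosh subgroup theorem is delicate and fails in full generality. In the present situation, however, $H$ is finitely generated and $G$ is a free pro-$p$ product of two free pro-$p$ groups of finite rank, which is exactly the regime in which a pro-$p$ Kurosh decomposition of $H$ is available; from it one extracts that $M\cap H$, the factor indexed by the trivial double coset, is a free factor of $H$. Every other ingredient—the identification of $H$ with $\phi(G)$ and the properness of $M\cap H$ in $H$—reduces to the elementary full-rank computation in $\Z_p^n$ carried out above, so the entire weight of the argument rests on correctly isolating and invoking the pro-$p$ Kurosh theorem for the finitely generated subgroup $H$.
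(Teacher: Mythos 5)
Your argument is correct, but note that the paper itself offers no proof to compare against: Proposition~\ref{powers} is imported verbatim from \cite[Proposition~5.7]{Slobodan}, so the statement is used here as a black box. Judged on its own merits, your proof is sound: the identification of $H=\langle x_1^{\alpha_1},\ldots,x_n^{\alpha_n}\rangle$ as a free pro-$p$ group of rank $n$ with the displayed basis is right (the abelianization computation gives $d(H)\geq n$, and an $n$-element generating set of a free pro-$p$ group of rank $n$ is a basis by Hopficity of finitely generated pro-$p$ groups, so $\phi\colon G\to H$ is an isomorphism), the properness of $M\cap H$ in $H$ via the full-rank image of $H$ in $G^{\mathrm{ab}}\cong\Z_p^n$ versus the rank-$m$ summand coming from $M$ is right, and pulling the proper free factor $M\cap H$ back through $\phi^{-1}$ to trap $u$ is exactly the intended use of the free-factor characterization in Theorem~\ref{Turner}(b).

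The one place where you reach for heavier machinery than necessary is the intersection step. You invoke a pro-$p$ Kurosh subgroup theorem for the finitely generated subgroup $H\leq M\ast N$ and correctly flag this as the delicate point; but the paper already records precisely the statement you need as Lemma~\ref{free factor} (a special case of \cite[Proposition~2.2]{Slobodan}): if $A$ is a free factor of a free pro-$p$ group $G$ and $H$ is an \emph{arbitrary} closed subgroup, then $A\cap H$ is a free factor of $H$. Citing that lemma removes the only soft spot in your write-up --- no finite generation hypothesis, no extraction of the trivial-double-coset factor from a Kurosh decomposition, and no dependence on which regimes of the pro-$p$ Kurosh theorem are actually valid. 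With that substitution your proof is complete and almost certainly runs along the same lines as the original proof in \cite{Slobodan}, which has the free-factor characterization and the intersection lemma as its basic tools.
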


Let $G$ be a pro-$p$ group. A non-primitive element $g \in G$ is called \emph{almost primitive} element of $G$ 
if it is a primitive element of every  proper subgroup of $G$ that contains it.
Thus $g \in G$ is an almost primitive element of $G$ if $g \in \Phi(G)$ and $g \notin \Phi(H)$ whenever $H \lneq  G$.
It is easy to see that an almost primitive element of a finitely generated pro-$p$ group is a test element (see \cite[Proposition~5.10]{Slobodan}).


\begin{lem}[{\cite[Lemma~5.17]{Slobodan}}]
\label{extension - free}
Let $G$ be a pro-$p$ group and $w \in \Phi(G)$. Then the following holds:
\begin{enumerate}[(a)]
\item there exists $H \leq G$  such that $w \in H$ and $w$ is an almost primitive element of $H$;
\item if $G$ is a free pro-$p$ group with basis $\{a_1, \ldots, a_n\}$, then there exists a subset $\{a_{i_1}, \ldots, a_{i_m}\} \subseteq \{a_1, \ldots, a_n\}$ such that
for every test element $u$ of $\widehat{F}_p(x_1, \ldots, x_m)$ contained in $\Phi(\widehat{F}_p)$, $w \cdot u(a_{i_1}, \ldots, a_{i_m})$ is a test element of $G$. Moreover, if $w \neq 1$, then
we can choose $m < n$.
\end{enumerate}
\end{lem}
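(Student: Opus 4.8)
The plan is to prove both parts through the free-factor criterion of Theorem~\ref{Turner}, together with the characterization of almost primitive elements recorded just above the statement: $g$ is almost primitive in $G$ precisely when $g\in\Phi(G)$ while $g\notin\Phi(H)$ for every proper closed subgroup $H\lneq G$ containing $g$. Recall also the immediate consequence that an almost primitive element is a test element, which I will use freely.

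For part (a) I would run a minimality argument. Let $\mathcal{S}$ be the set of closed subgroups $H\le G$ with $w\in\Phi(H)$, partially ordered by reverse inclusion; it is nonempty since $G\in\mathcal{S}$. First I would check that every chain $\{H_i\}$ in $\mathcal{S}$ has a lower bound, namely $H=\bigcap_i H_i$. Indeed, if $w\notin\Phi(H)$, then $w$ is primitive in $H$, so there is a continuous homomorphism $\rho\colon H\to\Z/p$ with $\rho(w)\neq 0$. Since $\ker\rho$ is open, $\rho$ factors through $HN/N$ for some open normal $N\unlhd G$; in the finite group $G/N$ the images $H_iN/N$ form a descending chain that stabilizes, and using the standard compactness identity $\bigcap_i H_iN=HN$ one finds an index $i_0$ with $H_{i_0}N=HN$. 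Composing $H_{i_0}\twoheadrightarrow H_{i_0}N/N=HN/N$ with $\rho$ gives a homomorphism nonzero on $w$, so $w\notin\Phi(H_{i_0})$, contradicting $H_{i_0}\in\mathcal{S}$. Hence $H\in\mathcal{S}$, and Zorn's Lemma yields a minimal element $H$; for it, $w\in\Phi(H)$ while minimality forces $w\notin\Phi(H')$ for every proper closed $H'\lneq H$ containing $w$, so $w$ is almost primitive in $H$.

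For part (b), by Theorem~\ref{Turner} it suffices to show that $v:=w\cdot u(a_{i_1},\dots,a_{i_m})$ lies in no proper free factor of $G$, since the proper retracts of a free pro-$p$ group are exactly its proper free factors. If $w=1$ I would take $m=n$ and the full basis, so that $v=u$ is a test element by hypothesis. If $w\neq 1$ the plan is to reserve one generator for $w$: after reindexing, choose the subset $\{a_1,\dots,a_{n-1}\}$ (so $m=n-1<n$), where the excluded index is selected, using part (a), so that $w$ is essential in the $a_n$-direction. Arguing by contradiction, suppose $v$ lies in a proper free factor $K$, write $G=K\ast C$ with $C\neq 1$, and let $q\colon G\to C$ be the complementary retraction. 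Since $v\in K=\ker q$ we obtain $q(w)\,u(q(a_1),\dots,q(a_{n-1}))=1$, and the goal is to see this is impossible: because the reserved generator makes $q$ restrict to a proper projection of $\widehat{F}_p(a_1,\dots,a_{n-1})$, the displayed relation would place $u$ in a proper free factor of a rank-$(n-1)$ free pro-$p$ group, contradicting that $u$ is a test element.

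The main obstacle is the final step of part (b): making precise the sense in which the reserved generator carries $w$, and proving that the complementary projection $q$ genuinely forces $u(q(a_1),\dots,q(a_{n-1}))$ into a \emph{proper} free factor of the rank-$(n-1)$ free pro-$p$ group. This is exactly where the pro-$p$ structure theory of free factors is required — additivity of rank under free products, and the behaviour of free factors and of the Frattini quotient under retractions — together with part (a), whose role is to guarantee, when $w\neq 1$, that a generator can indeed be reserved so that the complement $K$ is forced to be proper in the remaining variables.
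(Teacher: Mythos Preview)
First, note that the paper does not actually prove this lemma: it is quoted from \cite{Slobodan} in the preliminaries section without proof, so there is no in-paper argument to compare against directly.

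On the merits of your attempt: part (a) is correct. The Zorn's-lemma argument on $\{H\le G : w\in\Phi(H)\}$, with the compactness step showing that intersections of chains remain in the family, is a clean and standard way to produce the required subgroup.

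Part (b), however, has a genuine gap, and the strategy you outline does not close it. Two specific problems. First, your use of part (a) to ``reserve a generator so that $w$ is essential in the $a_n$-direction'' is not meaningful as stated: since $w\in\Phi(G)$, its image in $G/\Phi(G)$ is zero in \emph{every} coordinate, so no direction of the Frattini quotient can distinguish $w$. Part (a) hands you a subgroup $H$ in which $w$ is almost primitive, not a preferred basis vector, and the argument must work with $H$ itself. Second, even granting some choice of excluded generator, the complementary-projection step does not yield a contradiction. From $q(v)=1$ you obtain $u\bigl(q(a_1),\dots,q(a_{n-1})\bigr)=q(w)^{-1}$ in $C$, but this says nothing about $u$ as an element of $\widehat F_p(x_1,\dots,x_{n-1})$: the map $x_i\mapsto q(a_i)$ is an arbitrary homomorphism into $C$ (possibly even trivial), and the resulting relation cannot force $u$ into a proper free factor upstairs. (Incidentally, $K\neq\ker q$ in general --- the kernel is the normal closure of $K$ --- though this slip does not affect the equation $q(v)=1$.) The shape of Lemma~\ref{extension - free2}, which the paper says follows from the \emph{proof} of the present lemma, indicates the intended route: take the $H$ from part (a), select a minimal subset of the $a_i$ that together with $H$ generates $G$, and exploit that $w$ is almost primitive in $H$ --- rather than trying to encode $w$ in a single reserved basis direction.
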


The following Lemma follows easily from the proof of Lemma~\ref{extension - free}.

\begin{lem}
\label{extension - free2}
Let $G$ be a free pro-$p$ group of finite rank and $H \leq G$. If $G= \langle H \cup \{a_1, \ldots, a_n \} \rangle$ and $w \in H$ is an almost primitive element of $H$, then there exists a subset $\{a_{i_1}, \ldots, a_{i_m}\} \subseteq \{a_1, \ldots, a_n\}$ such that for every test element $u$ of $\widehat{F}_p(x_1, \ldots, x_m)$ contained in $\Phi(\widehat{F}_p)$, $w \cdot u(a_{i_1}, \ldots, a_{i_m})$ is a test element of $G$.
\end{lem}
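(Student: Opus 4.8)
The plan is to reduce the statement to the configuration that is already treated inside the proof of Lemma~\ref{extension - free}; the only genuinely new point is that $\{a_1,\dots,a_n\}$ is a \emph{generating set of $G$ modulo $H$} rather than (part of) a basis of $G$, and this is dealt with by a linear-algebra selection in the Frattini quotient. I work in $V=G/\Phi(G)$, an $\F_p$-vector space of dimension $d(G)$, and write $\bar g$ for the image of $g\in G$. Since $w$ is almost primitive in $H$ we have $w\in\Phi(H)\subseteq\Phi(G)$ (as $H^p[H,H]\subseteq G^p[G,G]$), so $\bar w=0$; and since $u$ is a test element of $\widehat F_p(x_1,\dots,x_m)$ lying in $\Phi(\widehat F_p)$, the value $u(a_{i_1},\dots,a_{i_m})$ is a product of $p$-th powers and commutators of the $a_{i_j}$ and hence lies in $\Phi(G)$, for any choice of the subset. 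Thus $w\cdot u(a_{i_1},\dots,a_{i_m})\in\Phi(G)$ is non-primitive regardless of how the subset is chosen.

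The selection step comes next. The hypothesis $G=\langle H\cup\{a_1,\dots,a_n\}\rangle$ forces $\bar H$ together with $\bar a_1,\dots,\bar a_n$ to span $V$, by the Frattini argument for pro-$p$ groups. I would therefore choose $\{a_{i_1},\dots,a_{i_m}\}\subseteq\{a_1,\dots,a_n\}$ \emph{minimal} subject to $\bar H+\langle \bar a_{i_1},\dots,\bar a_{i_m}\rangle=V$. Minimality forces $\bar a_{i_1},\dots,\bar a_{i_m}$ to be linearly independent modulo $\bar H$, whence $\bar H\oplus\langle\bar a_{i_1},\dots,\bar a_{i_m}\rangle=V$; in particular the $\bar a_{i_j}$ are linearly independent in $V$, so by the partial-basis criterion in free pro-$p$ groups $\{a_{i_1},\dots,a_{i_m}\}$ extends to a basis of $G$ and $A:=\langle a_{i_1},\dots,a_{i_m}\rangle$ is a free factor of $G$ of rank $m$. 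At the same time $\bar H+\bar A=V$ gives $\langle H,A\rangle=G$, again by the Frattini argument.

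This places me in exactly the situation reached in the proof of Lemma~\ref{extension - free}(b): an arbitrary subgroup $H$ in which $w$ is almost primitive, a free factor $A$ with distinguished basis $\{a_{i_1},\dots,a_{i_m}\}$ satisfying $\langle H,A\rangle=G$ and $\bar H\oplus\bar A=V$, and a test element $u\in\Phi(\widehat F_p(x_1,\dots,x_m))$. Indeed, the proof of that lemma first invokes its part~(a) to produce $H$ as such an \emph{arbitrary} subgroup, not one aligned with any prescribed basis of $G$, so the argument following that step already applies to any subgroup $H$ in which $w$ is almost primitive. The remainder of that proof shows, using only these structural features together with the test property of $u$, that $w\cdot u(a_{i_1},\dots,a_{i_m})$ is a test element of $G$ (concretely, by verifying it is an almost primitive element of $G$ and then invoking that almost primitive elements of finitely generated pro-$p$ groups are test elements). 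I would simply quote that argument.

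The main obstacle, and the only place genuinely requiring care, is the selection of the subset: one must extract from the generating set a partial basis of $G$ that still generates $G$ together with $H$, and this is precisely what the Frattini-quotient linear algebra delivers, ensuring that $A$ is a free factor with $\langle H,A\rangle=G$. This is exactly what allows the passage from the basis assumption of Lemma~\ref{extension - free} to the generating-set assumption here. The conceptual heart—combining the almost primitivity of $w$ in $H$ with the test property of $u$ to force $w\cdot u(a_{i_1},\dots,a_{i_m})$ to be almost primitive in $G$—is identical to Lemma~\ref{extension - free} and is therefore not reproved.
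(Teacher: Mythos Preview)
Your proposal is correct and matches the paper's approach exactly: the paper's entire proof is the single sentence ``The following Lemma follows easily from the proof of Lemma~\ref{extension - free},'' and your write-up is a faithful unpacking of that sentence---you select a subset of $\{a_1,\dots,a_n\}$ whose images complement $\bar H$ in $G/\Phi(G)$ (so that it is part of a basis of $G$ and still generates $G$ together with $H$), and then observe that this is precisely the configuration reached in the proof of Lemma~\ref{extension - free}(b) after its invocation of part~(a). There is nothing to add or correct.
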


We will also need the following special case of  {\cite[Proposition~2.2]{Slobodan}}.

\begin{lem}
\label{free factor}
Let $G$ be a free pro-$p$ group, $A$ a free factor of $G$, and $H$ an arbitrary subgroup of $G$. 
Then $A \cap H$ is a free factor of $H$.
\end{lem}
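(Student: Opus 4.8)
The plan is to realize $A\cap H$ as one of the factors in a Kurosh-type free pro-$p$ decomposition of $H$. First I would unwind the hypothesis: that $A$ is a free factor of the free pro-$p$ group $G$ means precisely that $G=A\amalg B$ is a free pro-$p$ product for some closed subgroup $B\le G$. The goal then becomes to decompose the closed subgroup $H\le A\amalg B$ compatibly with this splitting and to read off the factor coming from $A$. It is tempting to argue more cheaply via retractions: by Theorem~\ref{Turner}(b) it would suffice to produce a retraction $H\to A\cap H$, and $G=A\amalg B$ comes with the canonical retraction $\pi_A\colon G\to A$ (killing $B$). However, $\pi_A|_{A\cap H}$ is the inclusion $A\cap H\hookrightarrow A$ while $\pi_A(H)$ need not be contained in $H$, so restricting $\pi_A$ does \emph{not} give a retraction of $H$ onto $A\cap H$; equivalently, the extension $1\to H\cap N\to H\xrightarrow{\pi_A} \pi_A(H)\to 1$ (with $N=\ker\pi_A$ the normal closure of $B$) has no obvious splitting through $A\cap H$. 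For this reason I would abandon the naive retraction and pass to the full subgroup structure theorem.

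The engine is the pro-$p$ version of the Kurosh subgroup theorem for free pro-$p$ products (equivalently, pro-$p$ Bass--Serre theory applied to the pro-$p$ tree associated to $G=A\amalg B$, which is the mechanism behind \cite[Proposition~2.2]{Slobodan}). Applied to the closed subgroup $H\le A\amalg B$, it yields a decomposition as a free pro-$p$ product
\[
H \;=\; \Big(\coprod_{d\in D}\,(H\cap\, {}^{d}\!A)\Big)\,\amalg\,\Big(\coprod_{e\in E}\,(H\cap\, {}^{e}\!B)\Big)\,\amalg\, L,
\]
where ${}^{d}\!A=dAd^{-1}$, the index sets $D$ and $E$ run over suitably chosen representatives of the double coset spaces $H\backslash G/A$ and $H\backslash G/B$, and $L$ is a free pro-$p$ group. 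The key point is that the trivial double coset $H\cdot 1\cdot A=HA$ may be taken with representative $d=1\in D$, so that the corresponding factor is exactly $H\cap {}^{1}\!A=A\cap H$. Since each factor of a free pro-$p$ product is a free factor of the product, this exhibits $A\cap H$ as a free factor of $H$, which is the assertion.

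The main obstacle is entirely in the correct invocation of the pro-$p$ Kurosh theorem for an \emph{arbitrary} closed subgroup $H$ (not just an open one): in the pro-$p$ setting the factors are indexed by a profinite double-coset space and are organized by a continuous choice of representatives, so some care is needed to (i) guarantee the decomposition holds for non-open $H$ and (ii) justify that the base point $d=1$ is a legitimate representative, so that the honest intersection $A\cap H$ appears as a genuine free factor rather than merely one of its conjugates. Carrying these technical points out is what \cite[Proposition~2.2]{Slobodan} does in full generality, and the lemma is the two-factor special case in which one selects the factor attached to $d=1$; I would either cite that proposition directly or reproduce the relevant portion of its tree-theoretic proof restricted to $G=A\amalg B$.
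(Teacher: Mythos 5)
Your proposal is correct and takes essentially the same route as the paper: the paper gives no independent argument for this lemma, obtaining it verbatim as a special case of \cite[Proposition~2.2]{Slobodan}, which is precisely the citation you fall back on after sketching the pro-$p$ tree/Kurosh mechanism that underlies it. Your surrounding discussion (in particular the correct observation that the canonical retraction $G\to A$ does not restrict to a retraction of $H$ onto $A\cap H$, and your flagging of the delicacy of Kurosh-type decompositions for closed, non-open subgroups in the pro-$p$ setting) is consistent with the paper, so the two treatments coincide in substance.
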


\section{Proofs of the theorems}
\begin{proof}[Proof of Theorem~\ref{free}]
Let $X$ be any generating set of $F$. Then $X$ is also a generating set of the free pro-$2$ group $\widehat{F}_2$.
Note that every generating set of a free pro-$p$ group contains a basis. Let $X'=\{x_1, \ldots, x_n\} \subseteq X$ be a basis of $\widehat{F}_2$. 
By Proposition~\ref{free - test element}, $s=x_1^2 \ldots x_n^2$ is a test element of $\widehat{F}_2$, and it follows from Proposition~\ref{pro-p to discrete} 
that $s$ is also a test element of $F$. Observe that $d_X(1, s)=|s|_X=2n \leq 3n-2$. 

Now let $1 \neq w \in F$.
For $1 \leq i \leq n$, define $\sigma_{x_i}: \widehat{F}_2 \to \mathbb{Z}_2$ by $\sigma_{x_i}(x_j)=\delta_{ij}$ ($\delta_{ij}$ is the Kronecker delta). 
It is easy to see that there exist $\epsilon_i \in \{0, 1\}$, $1 \leq i \leq n,$ such that $\epsilon_i + \sigma_{x_i}(w) \in 2\mathbb{Z}_2$. 
Set $u=w$ if $\epsilon_i=0$ for all $1 \leq i \leq n$. If $\epsilon_j=1$ for some $1 \leq j \leq n$, then
$p_1=w x_1^{\epsilon_1} \ldots x_i^{\epsilon_j} \ldots x_n^{\epsilon_n} \neq 1$ or $p_2=w x_1^{\epsilon_1} \ldots x_j^{-\epsilon_j} \ldots x_n^{\epsilon_n} \neq 1$; let $u=p_1$ if $p_1 \neq 1$, else set $u=p_2$.
In any case, it is easy to see that $u \in \Phi(\widehat{F}_2)$. 

It follows from Lemma~\ref{extension - free} (b) and Proposition~\ref{free - test element} that
there exists a subset $\{x_{i_1}, \ldots, x_{i_m}\} \subsetneq X'$ such that
$t=ux_{i_1}^2 \ldots x_{i_m}^2$ is a test element of $\widehat{F}_2$. By Proposition~\ref{pro-p to discrete}, $t$ is also a test element of $F$. Clearly, $d_X(w, t)  \leq n+ 2(n-1)= 3n - 2$.
Hence, $\mathcal{T}$ is a  $(3n - 2)$-net with respect to $X$.
\end{proof}

\begin{definition} Let $n\geq 1$ be an integer. A pro-$p$ group $G$ is called a \emph{Poincar\'{e} group of dimension $n$}  if it satisfies the following conditions:
\begin{itemize}
\item[(i)] $\textrm{dim} _{\mathbb{F}_p} H^i(G,\mathbb{F}_p) < \infty$  for all $i$,
\item[(ii)] $\textrm{dim} _{\mathbb{F}_p} H^n(G,\mathbb{F}_p)=1$, \textrm{ and }
\item[(iii)] the cup-product $H^i(G,\mathbb{F}_p)\times H^{n-i}(G,\mathbb{F}_p) \to H^n(G,\mathbb{F}_p) \cong \mathbb{F}_p$ is a non-degenerate bilinear form for all $0 \leq i \leq n$.
\end{itemize}
\end{definition}
A Poincar\'{e}  group of dimension 2 is called a \emph{Demushkin group} (see \cite{Serre1}). Examples of Demushkin groups are the pro-$p$ completions of orientable surface groups.
 Demushkin groups also arise in algebraic number theory. For instance, if $k$ is a $p$-adic number field (a finite extension of $\mathbb{Q}_p$) containing a primitive $p$-th root of unity and $k(p)$ is the maximal $p$-extension of $k$, then $\textrm{Gal}(k(p)/k)$ is a Demushkin group (see \cite{Wingberg}). 
Demushkin groups have been classified completely by  Demushkin, Serre and Labute (see \cite{Demushkin1}, \cite{Demushkin2}, \cite{Serre2}, and  \cite{Labute}). 

In this paragraph we  summarise some of the features of Demushkin groups that will be used in this paper.  Every finite index subgroup $H$ of a Demushkin group $G$ is a Demushkin  group of rank $d(H) = 2 + [G:H](d(G)-2)$. Moreover, every subgroup of infinite index of a Demushkin group is free pro-$p$. The rank of a  Demushkin pro-$p$ group is even whenever $p$ is an odd prime. 

The following Lemma, proved in \cite{Slobodan}, is an immediate consequence of a result of Sonn \cite{Sonn}.

\begin{lem}\label{retract - Demushkin}
Let $G$ be a Demushkin group with $d(G)=n$ and $R$ be a proper retract of $G$. Then
$d(R) \leq \frac{n}{2}$.
\end{lem}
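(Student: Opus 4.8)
The plan is to exploit the cohomological structure of $G$ as a Demushkin group, i.e.\ a Poincar\'e group of dimension $2$, together with the fact that a proper retract of $G$ must be free pro-$p$. The argument I have in mind is essentially the one behind Sonn's bound on free pro-$p$ quotients, specialized to retracts.

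First I would show that the proper retract $R$ has infinite index in $G$. A retraction $r\colon G\to R$ exhibits $G$ as a semidirect product $G=R\ltimes N$ with $N=\ker r$, so that $[G:R]=|N|$. Since $G$ is a Poincar\'e group of dimension $2$ it has $\cd(G)=2<\infty$, and a pro-$p$ group of finite cohomological dimension is torsion-free; hence if $R$ had finite index then $N$ would be a finite, and therefore trivial, subgroup, forcing $R=G$ and contradicting properness. Thus $[G:R]=\infty$, and by the structure theory of Demushkin groups recalled above every subgroup of infinite index is free pro-$p$. In particular $R$ is free pro-$p$, so $\cd(R)\leq 1$ and $H^2(R,\F_p)=0$.

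Next I would pass to $\F_p$-cohomology in degree $1$. Put $V=H^1(G,\F_p)$, so that $\dim_{\F_p}V=d(G)=n$. Because $r$ is surjective, inflation $r^{*}\colon H^1(R,\F_p)\to V$ is injective (it is split by restriction along $R\hookrightarrow G$, since $r$ restricts to the identity on $R$); let $W=r^{*}\bigl(H^1(R,\F_p)\bigr)$, so $\dim_{\F_p}W=\dim_{\F_p}H^1(R,\F_p)=d(R)$. The key point is that $W$ is totally isotropic for the cup-product form $V\times V\to H^2(G,\F_p)\cong\F_p$: for $a,b\in H^1(R,\F_p)$ naturality of the cup product gives $r^{*}(a)\cup r^{*}(b)=r^{*}(a\cup b)$, and $a\cup b\in H^2(R,\F_p)=0$, so every cup product of elements of $W$ vanishes. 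I would note explicitly that this includes the squares $r^{*}(a)\cup r^{*}(a)=r^{*}(a\cup a)=0$, which is the point that matters when $p=2$, where the cup-product form is merely symmetric rather than alternating.

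Finally I would invoke the non-degeneracy axiom of a Poincar\'e group of dimension $2$: the cup-product form on $V$ is non-degenerate, hence $\dim_{\F_p}W^{\perp}=n-\dim_{\F_p}W$. Since $W$ is totally isotropic we have $W\subseteq W^{\perp}$, whence $\dim_{\F_p}W\leq n-\dim_{\F_p}W$, that is $d(R)=\dim_{\F_p}W\leq n/2$, as claimed. I expect the steps requiring the most care to be the reduction to $R$ being free pro-$p$ (the infinite-index argument via torsion-freeness, rather than the linear algebra) and the uniform treatment of the prime $p=2$, where one cannot rely on the form being alternating and must instead use that the inflated classes square to zero.
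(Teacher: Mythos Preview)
Your argument is correct and is exactly the cohomological proof underlying Sonn's theorem: the paper does not give its own proof but simply cites the lemma as an immediate consequence of \cite{Sonn} (via \cite{Slobodan}), and what you have written is the standard derivation of that bound, including the infinite-index step needed to see that $R$ is free pro-$p$ so that $H^2(R,\F_p)=0$. In other words, you have unpacked the reference rather than taken a different route.
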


\begin{lem}
\label{retract fratini intersection}
Let $G$ be a pro-$p$ group and $R$ be a retract of $G$. Then $\Phi^n(G) \cap R=\Phi^n(R)$ for every $n \geq 1$.
\end{lem}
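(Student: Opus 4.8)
The plan is to deduce the identity from two facts: that the Frattini subgroup is monotone under passage to subgroups, and that a surjective continuous homomorphism of pro-$p$ groups carries the Frattini series onto the Frattini series. The retraction $r : G \to R$ is exactly such a surjection, and it is the identity on $R$; these two properties together will pin down $\Phi^n(G) \cap R$ for every $n$.

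First I would record the auxiliary claim: if $\psi : A \to B$ is a surjective (continuous) homomorphism of pro-$p$ groups, then $\psi(\Phi^n(A)) = \Phi^n(B)$ for all $n \geq 1$. For $n = 1$ this rests on $\Phi(A) = \overline{A^p[A,A]}$: the set of $p$-th powers and commutators of $A$ maps onto that of $B = \psi(A)$, and since $A$ is compact and $B$ Hausdorff, $\psi$ is a closed map, so $\psi(\overline{A^p[A,A]}) = \overline{\psi(A^p[A,A])} = \overline{B^p[B,B]} = \Phi(B)$. For the inductive step, $\psi$ restricts to a surjection $\Phi^n(A) \twoheadrightarrow \Phi^n(B)$ by the induction hypothesis, and applying the case $n=1$ to this restriction gives $\psi(\Phi^{n+1}(A)) = \psi(\Phi(\Phi^n(A))) = \Phi(\Phi^n(B)) = \Phi^{n+1}(B)$.

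With the claim in hand, the inclusion $\Phi^n(G) \cap R \subseteq \Phi^n(R)$ is immediate: for $x \in \Phi^n(G) \cap R$ we have $r(x) = x$ since $x \in R$, while $r(x) \in r(\Phi^n(G)) = \Phi^n(R)$ since $r$ is a surjection onto $R$; hence $x \in \Phi^n(R)$. For the reverse inclusion I would use that $\Phi(\cdot)$ is monotone under inclusion of subgroups: if $A \leq B$ then $A^p \subseteq B^p$ and $[A,A] \subseteq [B,B]$, whence $\Phi(A) \subseteq \Phi(B)$; iterating along $R \leq G$ gives $\Phi^n(R) \subseteq \Phi^n(G)$, and combined with the obvious $\Phi^n(R) \subseteq R$ this yields $\Phi^n(R) \subseteq \Phi^n(G) \cap R$.

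The only genuinely delicate point is the topological bookkeeping in the auxiliary claim, namely establishing that $\psi$ commutes with closure, that is $\psi(\overline{S}) = \overline{\psi(S)}$, which relies on compactness of the domain to ensure $\psi$ is closed. Everything else is formal; note in particular that the inclusion $\supseteq$ needs no retraction at all, so it is the direction $\subseteq$ that genuinely uses $r|_R = \id$.
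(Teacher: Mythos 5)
Your proof is correct and follows essentially the same route as the paper's: the inclusion $\Phi^n(R) \subseteq \Phi^n(G) \cap R$ by monotonicity of the Frattini series, and the reverse inclusion by applying the retraction, using that $r$ fixes $R$ pointwise and that $r(\Phi^n(G)) = \Phi^n(R)$. The only difference is that you carefully justify the standard fact that surjective homomorphisms of pro-$p$ groups carry the Frattini series onto the Frattini series, which the paper uses implicitly.
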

\begin{proof}
For every $n \geq 1$, $\Phi^n(R) \subseteq \Phi^n(G)$. Let $r:G \to R$ be a retraction. Then
\[\Phi^n(G) \cap R=r(\Phi^n(G) \cap R)\subseteq r(\Phi^n(G)) \cap r(R)=\Phi^n(R) \cap R=\Phi^n(R).\]
Hence, $\Phi^n(G) \cap R=\Phi^n(R)$ for every $n \geq 1$.
\end{proof}

\begin{lem}
\label{extension - Demushkin}
Let $G$ be a Demushkin pro-$p$ group with $p \geq 5$ and $d(G) > 2$. Let $\{a_1, \ldots, a_{2n}\}$ be a minimal generating set of $G$.
If $w \in \Phi^{2}(G)$, then there exists a subset $\{a_{i_1}, \ldots, a_{i_m}\} \subseteq \{a_1, \ldots, a_{n+1}\}$ such that for every test element $u$ of $\widehat{F}_p(x_1, \ldots, x_m)$, $w \cdot u(a_{i_1}^p, \ldots, a_{i_m}^p)$ is a test element of $G$. 
\end{lem}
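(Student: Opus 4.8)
The plan is to use the characterization of test elements in Theorem~\ref{Turner}(a): it suffices to show that $t:=w\cdot u(a_{i_1}^p,\dots,a_{i_m}^p)$ lies in no proper retract of $G$. First I would record the structural facts that make the argument run. If $R$ is a proper retract of $G$, then $d(R)\le n$ by Lemma~\ref{retract - Demushkin}; since a finite-index subgroup of the Demushkin group $G$ has rank $2+[G:R](d(G)-2)\ge 2n>n$, the subgroup $R$ has infinite index and is therefore free pro-$p$. Next, $p$-th roots are unique in $G$: if $x^p=y^p$, then $\langle x,y\rangle$ is at most $2$-generated, hence (again by the index/rank formula, using $d(G)>2$) of infinite index and free pro-$p$, where roots are unique, so $x=y$; in particular $a_j^p\in R$ forces $a_j\in R$. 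Finally, $\Phi(G)\cap R=\Phi(R)$ by Lemma~\ref{retract fratini intersection}, so the inclusion induces an injection $R/\Phi(R)\hookrightarrow G/\Phi(G)$; as $a_1,\dots,a_{n+1}$ are independent modulo $\Phi(G)$, the containment $a_1,\dots,a_{n+1}\in R$ would give $d(R)\ge n+1$, contradicting $d(R)\le n$. This last contradiction is the target of the whole argument.

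To build $t$ I would first absorb $w$ into an almost primitive element. Since $w\in\Phi^2(G)=\Phi(\Phi(G))$, applying Lemma~\ref{extension - free}(a) to the pro-$p$ group $\Phi(G)$ yields a subgroup $H\subseteq\Phi(G)$ with $w\in H$ and $w$ almost primitive in $H$, which one arranges to have infinite index, hence to be free pro-$p$. I then set $L:=\langle H,a_1^p,\dots,a_{n+1}^p\rangle\subseteq\Phi(G)$, noting that every $a_j^p$ with $1\le j\le n+1$ is among the generators of $L$. Provided $L$ has infinite index it is free pro-$p$, and Lemma~\ref{extension - free2}, applied to $L$ with distinguished subgroup $H$ and added generators $a_1^p,\dots,a_{n+1}^p$, produces a subset $\{a_{i_1},\dots,a_{i_m}\}\subseteq\{a_1,\dots,a_{n+1}\}$ such that $t=w\cdot u(a_{i_1}^p,\dots,a_{i_m}^p)$ is a test element of $L$ for every test element $u$ of $\widehat{F}_p(x_1,\dots,x_m)$; here $u(a_{i_1}^p,\dots,a_{i_m}^p)\in\Phi^2(G)$ by Proposition~\ref{powers}, so $t\in\Phi^2(G)$.

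For the lifting step, suppose $t\in R$ with $R$ a proper retract, and put $M:=\langle R,L\rangle$. Because $L\subseteq\Phi(G)$ and $d(R)\le n$, the image of $M$ in $G/\Phi(G)$ has dimension at most $n<2n$, so $M\ne G$ and in fact $[G:M]\ge p^{n}$; the Demushkin rank formula then forces a finite-index $M$ to have rank $2+[G:M](2n-2)$, which is incompatible with the bound $d(M)\le d(R)+d(L)$ once $d(L)$ is controlled, so $M$ has infinite index and is free pro-$p$. Since $R$ is a retract of the free pro-$p$ group $M$, it is a free factor of $M$ by Theorem~\ref{Turner}(b), and Lemma~\ref{free factor} shows that $R\cap L$ is a free factor of $L$. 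As $t\in R\cap L$ and $t$ is a test element of $L$, we must have $R\cap L=L$, that is $L\subseteq R$. Then $a_1^p,\dots,a_{n+1}^p\in R$, whence $a_1,\dots,a_{n+1}\in R$ by uniqueness of roots, forcing $d(R)\ge n+1$ and contradicting $d(R)\le n$. Thus $t$ lies in no proper retract and is a test element of $G$.

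The main obstacle I expect is the rank/index bookkeeping inside the Demushkin group: one must keep $H$, $L$, and $\langle R,L\rangle$ of infinite index (so that they are free pro-$p$ and the machinery of Lemma~\ref{free factor} and Theorem~\ref{Turner}(b) applies) while simultaneously insisting that $L$ contain every $a_j^p$ with $1\le j\le n+1$, so that the single relation $t\in R$ cascades through $L\subseteq R$ and unique roots into $a_1,\dots,a_{n+1}\in R$. The delicate point is controlling the rank of the auxiliary subgroup $H=H(w)$ returned by Lemma~\ref{extension - free}(a); the generous index bound $[G:M]\ge p^{n}$, rather than merely $[G:M]\ge 2$, is exactly what provides the room needed, and checking this carefully is the heart of the proof.
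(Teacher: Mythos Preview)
Your overall architecture is right—build a test element $t$ inside an auxiliary free pro-$p$ subgroup, then show that any proper retract containing $t$ would have to swallow all of $a_1^p,\ldots,a_{n+1}^p$ and hence (by unique roots) $a_1,\ldots,a_{n+1}$, contradicting $d(R)\le n$. The gap is precisely at the point you call ``delicate'': you never actually control $d(H)$, and without that both infinite-index claims (for $L$ and for your $M=\langle R,L\rangle$) fail. Applying Lemma~\ref{extension - free}(a) inside $\Phi(G)$ yields some $H\le\Phi(G)$ in which $w$ is almost primitive, but $\Phi(G)$ is Demushkin of rank $2+p^{2n}(2n-2)$, and its closed subgroups can have arbitrarily large (or infinite) rank. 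Hence the bound $d(M)\le d(R)+d(L)\le n+d(H)+(n+1)$ is vacuous, and no lower bound on $[G:M]$—not $p$, not $p^n$—rescues the comparison with the Demushkin rank formula. The sentence ``once $d(L)$ is controlled'' is exactly the missing step, and it cannot be filled in with $H$ chosen this way.

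The paper's proof fixes this by choosing $H$ inside a proper retract $R$ of $G$ rather than inside $\Phi(G)$: if $w$ is not already a test element, it lies in some proper retract $R$, and since $w\in\Phi^2(G)\cap R=\Phi^2(R)$ (Lemma~\ref{retract fratini intersection}) one gets $H\lneq R$ with $w$ almost primitive in $H$. The point is that $d(R)\le n$, so $R$ is free pro-$p$ of small rank. When the time comes to show the analogue of your $M$ is free pro-$p$, the paper does \emph{not} bound via $d(H)$ at all; instead it enlarges $H$ to a maximal subgroup $M_0$ of $R$, for which Schreier gives $d(M_0)\le p(n-1)+1$, and then bounds $d\bigl(\langle M_0,\,a_1^p,\ldots,a_{n+1}^p,\,R'\rangle\bigr)\le p(n-1)+2n+2$. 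Comparing this with $d(U)\ge p(2n-2)+2$ for every proper open $U$ is exactly where the hypothesis $p\ge 5$ is used. So the idea you are missing is: first trap $w$ in a retract of rank at most $n$, then do the rank bookkeeping via a maximal subgroup of that small free pro-$p$ group, bypassing $d(H)$ entirely.
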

\begin{proof}
Let $w \in \Phi^2(G)$. If $w$ is a test element of $G$, we are done, otherwise it follows from Theorem~\ref{Turner} $(a)$ that $w$ is contained in a proper retract $R$ of $G$.
By Lemma~\ref{retract fratini intersection}, $w \in \Phi^{2}(R)=R \cap \Phi^2(G)$, and it follows from Lemma~\ref{extension - free} (a) that there exists $H \lneq R$  such that $w \in H$ and $w$ is an almost primitive element of $H$.
 
Set $K=\langle H \cup\{a_1^p, \ldots, a_{n+1}^p\}\rangle$. We claim that $K$ is a free pro-$p$ group.  Let $L=\langle R \cup\{a_1^p, \ldots, a_{n+1}^p\}\rangle$.
Since $K \leq L$, it suffices to show that $L$ has infinite index in $G$.  
As $\{a_1^p, \ldots, a_{n+1}^p\} \subseteq \Phi(G)$ and $d(R) \leq n$ by Lemma~\ref{retract - Demushkin}, we have $L \neq G$. Let $U$ be a proper open subgroup of $G$. Then
\[d(U) \geq p(2n-2)+2 >  2n + 1 \geq d(L),\]
and thus $L \neq U$. 

By Lemma~\ref{extension - free2}, there exists a subset $\{a_{i_1}^{p}, \ldots, a_{i_m}^{p}\} \subseteq \{a_1^{p}, \ldots, a_{n+1}^{p}\}$ such that for every test element $u$ of $\widehat{F}_p(x_1, \ldots, x_m)$ contained in $\Phi(\widehat{F}_p)$, 
$t=w \cdot u(a_{i_1}^{p}, \ldots, a_{i_m}^{p})$ is a test element of $K$. We want to prove that $t$ is also a test element of $G$.

Suppose to the contrary that $t$ is not a test element of $G$. Then it follows from Theorem~\ref{Turner} $(a)$ that $t$ is contained in a proper retract $R'$ of $G$.
Set $T= \langle K \cup R' \rangle$. We will show that $T$ is free pro-$p$. To that end, let $M$ be a maximal subgroup of $R$ that contains $H$ (recall that $H \lneq R$), and let $P=\langle M \cup \{a_1^p, \ldots, a_{n+1}^p\} \cup R' \rangle$.
Clearly, $T \leq P$, and hence, it suffices to prove that $P$ has infinite index in $G$. First note that $P \neq G$. Indeed, since $\{a_1^p, \ldots, a_{n+1}^p\} \subseteq \Phi(G)$, $d(R') \leq n$ (by Lemma~\ref{retract - Demushkin}), and 
\[\begin{split}
&\dim_{\mathbb{F}_p} (M\Phi(G) \mathbin/ \Phi(G))=\dim_{\mathbb{F}_p} (M\mathbin/ (\Phi(G) \cap M))\\
=&\dim_{\mathbb{F}_p} (M\mathbin/ (\Phi(R) \cap M)) =\dim_{\mathbb{F}_p}(M \mathbin/ \Phi(R)) \leq n-1,
\end{split}\]
it follows that $\dim_{\mathbb{F}_p}(P\Phi(G) \mathbin/ \Phi(G)) \leq 2n- 1 < 2n = \dim_{\mathbb{F}_p}(G \mathbin/ \Phi(G))$.

By the Schreier index formula, $d(M)=p(d(R) - 1)+1 \leq p(n-1)+1$.
Hence, $d(P) \leq p(n - 1) + 2n+ 2$. Let $U$ be a proper open subgroup of $G$. Then 
\[d(U) \geq p(2n-2)+2 >  p(n - 1) + 2n+ 2 \geq d(P).\]
Hence, $P$ has infinite index in $G$, and consequently $T$ is a free pro-$p$ group. 

Note that $R'$ is a retract of $T$. As $T$ is free pro-$p$, it follows from Theorem~\ref{Turner} $(b)$ that $R'$ is a free factor of $T$.
By Lemma~\ref{free factor}, $K \cap R'$ is a free factor of $K$ containing $t$. Moreover, we will show that $K \cap R'$ is a proper free factor of $K$, which will contradict the fact that $t$ is a test element of $K$.
Suppose to the contrary that $K \leq R'$. Then $\{a_1^p, \ldots, a_{n+1}^p\} \subseteq R'$. 
Let $r:G \to R'$ be a retraction. Then, for all $1 \leq i \leq n+1$, $a_i^p=r(a_i^p)=r(a_i)^p$, and by unique extraction of roots in the free pro-$p$ group $\langle a_i, r(a_i) \rangle$, 
$a_i=r(a_i) \in R'$. Hence, $n \geq d(R') \geq n+1$, a contradiction. 
\end{proof}

\begin{rem}
Note that the proof of Lemma~\ref{extension - Demushkin} remains valid if we assume $d(G) > 4$ and $p=3$. This observation can be used to increase the lower bound in Corollary~\ref{density - test elements} $(iii)$.
\end{rem}

\begin{proof}[Proof of Theorem~\ref{surface}]
$(i)$ Let $X$ be a generating set of $G$, and 
let $w \in G$. Then $X$ is also a generating set of the pro-$5$ completion $\widehat{G}_5$ of $G$, which, as noted earlier, is a Demushkin group.
Let $X'=\{x_1, \ldots, x_{2n}\} \subseteq X$ be a minimal generating set of $\widehat{G}_5$. For each $1 \leq i \leq 2n$, there is a unique homomorphism $\sigma_{x_i}:\widehat{G}_5 \to \mathbb{Z}_5$ satisfying $\sigma_{x_i}(x_j)=\delta_{ij}$.
Choose $\alpha_i \in \{0,1,2,3,4\}$, $1 \leq i \leq 2n$, such that $\alpha_i + \sigma_{x_i}(w) \in 5\mathbb{Z}_5$. 
Set $u=wx_1^{\alpha_1}\ldots x_{2n}^{\alpha_{2n}}$. It is easy to see that $u \in \Phi(\widehat{G}_5)$.

There is an obvious choice of a right transversal of $\Phi(\widehat{G}_5)$ in $\widehat{G}_5$, namely,
\[T=\{x_1^{s_1} \ldots x_{2n}^{s_n} \mid  0 \leq s_i \leq 4 \text{ for all } 1\leq i \leq 2n\}.\]
It follows from  Schreier's Lemma  that $\Phi(\widehat{G}_5)$ has a minimal generating set $Y=\{y_1, \ldots, y_k\}$, $k=d(\Phi(\widehat{G}_5))=2+5^{2n}(2n-2)$, satisfying
$\max\{|y_i|_X \mid 1\leq i \leq k\} \leq 16n + 1$. 
Since $\Phi(\widehat{G}_5)$ is a pro-$5$ completion of a surface group, 
similarly as before, for each $1 \leq i \leq k$, we have a unique homomorphism $\sigma_{y_i}:\Phi(\widehat{G}_5) \to \mathbb{Z}_5$  satisfying $\sigma_{y_i}(y_j)=\delta_{ij}$. Let
$\beta_i \in \{0,1,2,3,4\}$ be such that $\beta_i + \sigma_{y_i}(u) \in 5\mathbb{Z}_5$.
Set $v=uy_1^{\beta_1}\ldots y_{k}^{\beta_{k}}$. Then $v \in \Phi^2(\widehat{G}_5)$. By Lemma~\ref{extension - Demushkin} and Proposition~\ref{free - test element}, there is a subset $\{x_{i_1}, \ldots, x_{i_m}\} \subseteq \{x_1, \ldots, x_{n+1}\}$
such that $t=vx_{i_1}^{5^2} \ldots x_{i_m}^{5^2}$ is a test element of $\widehat{G}_5$. By Proposition~\ref{pro-p to discrete}, $t$ is also a test element of $G$.
Observe that $d_X(w,t) \leq 8n+4(2+5^{2n}(2n-2))(16n + 1)+5^2(n+1)=161n+8 \cdot 25^n(n-1)(16n+1)+33$.

\smallskip

$(ii)$ Recall that $G=\langle x_1, \ldots, x_n \mid x_1^2 \ldots x_n^2 \rangle$. Moreover, the pro-$3$ completion $\widehat{G}_3$ of $G$ has the same presentation as a pro-$3$ group (see \cite[Lemma~2.1]{Lu}). 
Observe that $t= x_1^2 \ldots x_n^2$ is a primitive element in the free pro-$3$ group $\widehat{F}_3(x_1, \ldots, x_n)$.
Hence, $\widehat{G}_3$ is a free pro-$3$ group of rank $n-1$. Now the proof of $(ii)$ is similar to the proof of Theorem~\ref{free}.

\end{proof}

\begin{proof}[Proof of Theorem~\ref{surface - dense}]
We only give the proof for orientable surface groups; the proof for non-orientable surface groups is similar.
Let $G$ be an orientable surface group of genus $n \geq 2$, and let $\{x_1, \ldots, x_{2n}\}$ be a minimal generating set of $G$.
Let $w \in G $ and $N \unlhd G$ with $|G:N| = l < \infty$.
We need to show that the coset $wN$ contains a test element of $G$.
Choose a prime $p$ such that $p \nmid l$ and $p \geq 5$. 
For each $1 \leq i \leq 2n$, we can find a natural number 
$r_i \geq 0$ satisfying $p \mid \sigma_{x_i}(w) + r_il$ where $\sigma_{x_i}:G \to \mathbb{Z}$ is defined by $\sigma_{x_i}(x_j)=\delta_{ij}$. Set $u=wx_1^{r_1l} \ldots x_{2n}^{r_{2n}l}$.
Then $wN=uN$ and $u \in \Phi(\widehat{G}_p)$. As in the proof of Theorem~\ref{surface}, let $Y=\{y_1, \ldots, y_k\}$, $k=d(\Phi(\widehat{G}_p))=2+p^{2n}(2n-2)$, be a minimal generating set of $\Phi(\widehat{G}_p)$.
Let $s_i \geq 0, 1 \leq i \leq k$, be natural numbers that satisfy $\sigma_{y_i}(u) + s_il \in p\mathbb{Z}_p$ where $\sigma_{y_i}$ is defined as in the proof of  Theorem~\ref{surface}. 
Let $v=uy_1^{s_1l} \ldots y_{k}^{s_{k}l}$. Then $v \in \Phi^2(\widehat{G}_p)$ and $vN=uN=wN$.

By Lemma~\ref{extension - Demushkin}, Proposition~\ref{free - test element}, and Proposition~\ref{powers}, there is a subset $\{x_{i_1}, \ldots, x_{i_m}\} \subseteq \{x_1, \ldots, x_{n+1}\}$
such that $t=vx_{i_1}^{p^2l} \ldots x_{i_m}^{p^2l}$ is a test element of $\widehat{G}_p$. 
It follows from Proposition~\ref{pro-p to discrete} that $t$ is a test element of $G$. Note that $t \in vN=wN$.
\end{proof}

\bibliographystyle{plain}

\end{document}